\long\def\beginpgfgraphicnamed#1#2\endpgfgraphicnamed{\includegraphics{#1}}
\newenvironment{algorithm}[1]
  {\innercustomthm}
  {\endinnercustomthm}
\definecolor{darkred}{rgb}{0.9,0.1,0.1}
\def\E{\mathbf{E}}
\def\v{\chi}
\begin{document} 

  \title{Improved diffusion Monte Carlo}
  \author{Martin~Hairer\inst{1} and Jonathan~Weare\inst{2}}
  \institute{ Mathematics Department, the University of Warwick  \and Department of Statistics and James Franck Institute, the University of Chicago
   \\ \email{M.Hairer@Warwick.ac.uk}, \email{weare@uchicago.edu}}
  \titleindent=0.65cm

  \maketitle
  \thispagestyle{empty}

\begin{abstract}
We propose a modification, based on the RESTART (repetitive simulation trials after reaching thresholds) and DPR (dynamics probability redistribution) rare event simulation algorithms, 
of the standard diffusion Monte Carlo (DMC) algorithm. The new algorithm has a lower variance
per workload, regardless of the regime considered. In particular, it makes it feasible to
use DMC in situations where the ``na\"\i ve'' generalisation of the standard algorithm would
be impractical, due to an exponential explosion of its variance.
We numerically demonstrate the effectiveness of the new algorithm on a standard rare event simulation problem
(probability of an unlikely transition in a Lennard-Jones cluster), as well as a high-frequency
data assimilation problem. 
\end{abstract}

\keywords{Diffusion Monte Carlo, quantum Monte Carlo, rare event simulation, sequential Monte Carlo, particle filtering, Brownian fan, branching process}

\tableofcontents

\section{Introduction}
Diffusion Monte Carlo (DMC) is a well established method popularized within the Quantum Monte Carlo (QMC) community  to compute the ground state energy (the lowest eigenvalue) of the Hamiltonian operator
\[
\mathcal{H} \psi =  -\Delta \psi  + V \psi
\]
as well as averages with respect to the square of  the corresponding eigenfunction 
\cite{Kalos1962,GrimmStorer1971,Anderson1975,CeperleyAlder1980,KolorencL2011}.
It is based on the fact that the Feynman--Kac formula
\begin{equation}\label{avet}
\int f(x) \psi(x,t) dx =  \mathbf{E}\left( f(B_t) \exp\Bigl({-\int_0^t V(B_s)ds}\Bigr)\right)\;,
\end{equation}
connects the solution, $\psi(x,t)$, of the partial differential equation
\begin{align}\label{psipde}
&\partial_t \psi =  -\mathcal{H}\psi \notag\\
& \psi(0,\cdot) = \delta_0\;,
\end{align} 
to expectations of a standard Brownian motion $B_t$ starting at $x$.  
For large times $t$, suitably normalised integrals against the solution of \eqref{psipde} 
approximate normalised integrals against the ground state eigenfunction of $\mathcal{H}$.



As we will see in the next section, DMC is an extremely flexible tool with many potential applications 
going well beyond quantum Monte Carlo.   In fact, the basic operational principles of DMC, as described in this article, can be traced back at least to what would today be called a rare event simulation scheme introduced in \cite{HammersleySIS:1954} and \cite{RosenbluthSIS:1955}.  In the data assimilation context a popular variant of DMC, sequential importance sampling (see e.g. \cite{defreitas05, liu02}), computes normalised expectations of the form
\[
\frac{\mathbf{E}\left( f(y_t) \exp\left( -\sum_{t_k\leq t} V(k,y_{t_k})\right)\right)}{\mathbf{E} \exp\left( -\sum_{t_k\leq t} V(k,y_{t_k})\right) }\;,
\]
where the $V(k,\cdot)$ are functions encoding, for example, the likelihood of  sequentially arriving observations (at times $t_1<t_2<t_3<\cdots$) given the state of some Markov process $y_t$.  
A central component of a sequential importance sampling scheme is a so-called ``resampling'' step (see e.g. \cite{defreitas05,liu02}) in which copies of a system are weighted by the ratio of two densities and then resampled to produce an unweighted set of samples.  Some popular resampling algorithms are  adaptations of the generalised version of DMC that we will present below in Algorithm~\ref{dmc} (e.g. residual resampling as described in  \cite{liu02}).  Our results suggest that sequential importance sampling schemes could be improved (in some cases dramatically) by building the resampling step from our modification of DMC in Algorithm~\ref{tdmc}.




One can imagine a wide range of potential uses of DMC.  For example, we will show that the generalisation of DMC  in Algorithm~\ref{dmc} below could potentially be used to compute approximations to quantities of the form
\[
\mathbf{E}\left( f(y_t) \exp\Bigl({-\int_0^t V(y_t) dy_t}\Bigr)\right)\;,
\]
or
\begin{equ}[e:rareEvent]
\mathbf{E}\left( f(y_t) \exp\left({-V(y_t)}\right)\right)\;.
\end{equ}
where $y_t$ is a diffusion.  Continuous time data assimilation requires the approximation of expectations similar to first type above while, when applied to computing expectations of the type in \eqref{e:rareEvent}, DMC becomes a rare event simulation technique (see e.g. \cite{HammersleySIS:1954, RosenbluthSIS:1955, Allen:FFS:2006, Johansen:SMCrare:2006}), i.e.\ a tool for efficiently generating samples of extremely low probability events and computing their probabilities.   Such tools can be used  to compute the probability or frequency of dramatic fluctuations in a stochastic process.  Those fluctuations might, for example, characterise a reaction in a chemical system or a  failure event in an electronic device
 (see e.g. \cite{Frenkel1996,Bucklew:2004p3891}). 

The mathematical properties of DMC have been explored in great detail (see e.g. \cite{DelMoral:FK:2011}).  Particularly relevant to our work is the thesis \cite{Rousset:PhD:2006} which studies the continuous time limit of DMC  in the quantum Monte Carlo context  and introduces schemes for that setting that bear some resemblance to our modified scheme Algorithm~\ref{tdmc}.  The initial motivation for the present work is the observation that, in some of the most interesting settings, the natural generalisation of DMC introduced in Algorithm~\ref{dmc} exhibits a dramatic instability.  We show that
this instability can be corrected by a slight enlargement of the state space to 
include a parameter dictating which part of the state space
a sample is allowed to visit.  The resulting method is introduced in Algorithm~\ref{tdmc} in Section \ref{sec:summary} below. 
This modification is inspired by the RESTART and DPR rare event simulation algorithms \cite{Villen-Altamirano1991,HarasztiTownsend1999,MR2833612}.  The RESTART and DPR algorithms 
bias an underlying Markov process by splitting the trajectory of the process into multiple 
trajectories and then appropriately reweighing the resulting trajectories. 
The splitting occurs every time the process moves from one set to another in a
predefined partition of the state space.
The key feature of the algorithms that we borrow is a restriction placed on the 
trajectories resulting from a splitting event.
The new trajectories are only allowed to explore a certain region of state space 
and are eliminated when they exit this region. 
Our modification of DMC is similar, except that our branching rule is not 
necessarily based on a fixed partition of state space.

In Section~\ref{sec:ex}, we demonstrate the stability and utility of the new method in two applications.  In the first one, 
we use the method to compute the probability of very unlikely transitions in a small cluster of 
Lennard-Jones particles.  In the second example we show that the method can be used to efficiently 
assimilate high frequency observational data from a chaotic diffusion. 
In addition to these computational examples, we provide an in-depth mathematical study of the new scheme (Algorithm~\ref{tdmc}).  
We show that regardless of parameter 
regime  and even underlying dynamics (i.e.\ $y_t$ need not be related to a diffusion)
the estimator generated by the new algorithm has lower variance per workload than the 
DMC estimator.   
In a companion article \cite{brownianfan}, by focusing on a particular asymptotic regime (small time-discretisation parameter) 
we are also able to rigorously establish several dramatic results concerning the stability of 
Algorithm~\ref{tdmc} in settings in which the straightforward generalisation of DMC is unstable.  In particular, in that work we provide a characterization and proof of convergence in the continuous time limit to a well-behaved limiting Markov process which we call the Brownian fan.

\subsection{Notations}

In our description and discussion of the methods below we will use the standard notation $\lfloor a\rfloor = \max\{ i\in \mathbb{Z}:\,i\leq a\}$.


\subsection*{Acknowledgements}

{\small
We would like to thank H.~Weber for numerous discussions on this article and S.~Vollmer for
pointing out a number of inaccuracies found in earlier drafts.  We would also like to thank J.~Goodman and E.~Vanden-Eijnden for helpful conversations and advice during the early stages of this work.
Financial support for MH was kindly provided by EPSRC grant EP/D071593/1, by the 
Royal Society through a Wolfson Research Merit Award, and by the Leverhulme Trust through a Philip Leverhulme Prize.  JW was supported by NSF through award
DMS-1109731.
}

\section{The algorithm and a summary of our main results}\label{sec:summary}
With the potential term $V$ removed, the  PDE \eqref{psipde} is simply the familiar Fokker-Planck (or forward Kolmogorov) equation and 
 one can of course approximate \eqref{avet} (with $V=0$) by 
\[
 \widehat f _t  = \frac{1}{M}\sum_{j=1}^M f(w^{(j)}_t)\;,
 \]
 where the $w^{(j)}$ are $M$ independent realisations of a Brownian motion.  The
probabilistic interpretation of the source term $V\psi$ in the PDE is that it is responsible for the killing and creation of sample trajectories of $w$.  In practice one cannot exactly compute the integral appearing in the exponent in
\eqref{avet}.   Common practice (assuming that $t_k = k\eps$ ) is to replace the integral by the approximation
\[
\int_0^t V(w_s)\,ds \approx \sum_{k=0}^{\lfloor t/\eps\rfloor-1} \frac{1}{2}\left(V(w_{t_{k+1}})+V(w_{t_k})\right)\eps\;,
\]
where $\eps>0$ is a small time-discretisation parameter.
DMC then approximates
\begin{equation}\label{qmcave}
\langle f \rangle_t =  \mathbf{E} \biggl( f(w_t) \exp\Bigl(- \sum_{k=0}^{\lfloor t/\eps\rfloor-1} \frac{1}{2}\left(V(w_{t_{k+1}})+V(w_{t_k})\right)\eps \Bigr)\biggr).
\end{equation}
The version of DMC that we state below in Algorithm~\ref{dmc} is a slight generalisation of the standard algorithm and
approximates
  \begin{equation}\label{discavet2}
\langle f \rangle_t =  \mathbf{E}\biggl( f(y_t) \exp\Bigl(- \sum_{t_k\leq t}\v(y_{t_k},y_{t_{k+1}}) \Bigr)\biggr)\;,
\end{equation}
where $y_t$ is any Markov process and $\v$ is any function of two variables.  Strictly for convenience we will assume that the time $t$ is among the discretization points $t_0,t_1,t_2,\dots.$
The DMC algorithm proceeds as follows:
\begin{algorithm}{DMC}\label{dmc}
Slightly generalised DMC
{\tt
\begin{enumerate}
\item Begin with $M$ copies $x^{(j)}_0 = x_0$ and $k=0$.
\item At step $k$  there are $N_{t_k}$ samples $x^{(j)}_{t_k}$.  Evolve each of these $t_{k+1}-t_k$ units of time under the underlying dynamics  to generate
$N_{t_k}$ values \[\tilde x^{(j)}_{t_{k+1}}\sim \mathbf{P}\bigl( y_{t_{k+1}}\in dx\,|\, y_{t_k}=x^{(j)}_{t_k}\bigr).\]
\item For each $j=1,\dots, N_{t_k}$, let 
\[
P^{(j)} = e^{-\v(x^{(j)}_{t_k},\tilde x^{(j)}_{t_{k+1}})}
\]
and set
\[
N^{(j)} = \lfloor P^{(j)} + u^{(j)} \rfloor\;,
\]
where the $u^{(j)}$ are independent $\mathcal{U}(0,1)$ random variables.
\item For  $j=1,\dots, N_{t_k}$, and for $i=1,\dots,N^{(j)}$ set
\[
x^{(j,i)}_{t_{k+1}} = \tilde x^{(j)}_{t_{k+1}}.
\]
\item  Finally, set $N_{t_{k+1}} = \sum_{j=1}^{N_{k\eps}} N^{(j)}$ and list the $N_{t_{k+1}}$ vectors 
$\bigl\{x^{(j,i)}_{t_{k+1}}\bigr\}$  as $\bigl\{x^{(j)}_{t_{k+1}}\bigr\}_{j=1}^{N_{t_{k+1}}}$.
\item
At time $t,$ produce the estimate
\[
 \widehat f_t  = \frac{1}{M}\sum_{j=1}^{N_t} f(x^{(j)}_t)\;.
 \]
\end{enumerate}}
\end{algorithm}
Here, the notation $\CU(a,b)$ was used to refer to the law of a random variable that is uniformly distributed
in the interval $(a,b)$. 
Below we will refer to the $x^{(j)}_t$ as \emph{particles} and to a collection of particles as an \emph{ensemble}.

\begin{remark}\label{rem:cond}
If the goal is to compute the ``conditional'' quantity $\langle f\rangle_t / \langle 1\rangle_t$
then one can simply redefine $P^{(j)}$ in Step 3 by 
\[
P^{(j)} = \frac{e^{-\v(x^{(j)}_{t_k},\tilde x^{(j)}_{t_{k+1}})}}{\frac{1}{N_t}\sum_{l=1}^{N_t} e^{-\v(x^{(l)}_{t_k},\tilde x^{(l)}_{t_{k+1}})}}.
\]
With this choice the copy number $N_t$ becomes a Martingale and will need to be controlled at the cost of some ensemble size dependent bias.  The value of $N_t$ can be maintained exactly at some predefined value $M$ by uniformly upsampling or downsampling the ensemble after each iteration.  Alternatively one can hold $N_t$ near  $M$ by
multiplying this $P^{(j)}$ by $(M/N_t)^\alpha$ for some $\alpha>0$ as we do in Section  \ref{sec:filter}.
\end{remark}

Algorithm~\ref{dmc} results in an unbiased estimate of \eqref{discavet2}, i.e.
\[
\mathbf{E}^1 \widehat f_t =  \langle f\rangle_t\;.
\]
For reasonable choices of $\v$ and $f$ (e.g. $\sup_{t_k\leq t}\langle 1 \rangle_{t_k}<\infty$ and $\langle |f|\rangle_t<\infty$) the law of large numbers then implies that
\[
\lim_{M\rightarrow\infty} \widehat{f}_t =  \langle f\rangle_t.
\]
We use the symbol $\mathbf{E}^1$ for expectations under the rules of Algorithm~\ref{dmc} to distinguish them
for expectations under the rules of our modified algorithm (Algorithm~\ref{tdmc} below) which will simply be denoted by $\mathbf{E}$.

 Of course if we set
\begin{equation}\label{vqmc}
\v(x,y) = \frac{1}{2}\left(V(x)+V(y)\right)(t_{k+1}-t_k)\;,
\end{equation}
then we are back to the quantum Monte Carlo setting.  We might, however, choose
\begin{equation}\label{sint}
\v(x,y) = V(x)(y-x)\;,
\end{equation}
which, if $y_t$ approximates a diffusion (which we also denote by $y_t$),  formally corresponds to an approximation of 
\begin{equation*}
\langle f \rangle_t =  \mathbf{E} \left( f(y_t) \exp\Bigl(-\int_0^t V(y_s)dy_s\Bigr)\right)\;,
\end{equation*}
or 
\begin{equation}\label{reV}
\v(x,y) = V(y)-V(x)\;,
\end{equation}
corresponding to an approximation of
\begin{equation*}
\langle f \rangle_t =  e^{V(x_0)}\mathbf{E}\Bigl( f(y_t) e^{-V(y_t)}\Bigr).
\end{equation*}

The generalisation of DMC resulting from various choices of $\v$ is not frivolous.    In Section \ref{sec:ex}
we will see that, with $\v$ of form \eqref{reV}, one can design potentially very useful 
 rare event and particle filtering schemes. Unfortunately,  when  $y_t$ is a diffusion or a discretisation of a diffusion, the resulting Algorithms behave
extremely poorly in the small discretisation size limit ($t_{k+1}-t_k\rightarrow 0)$.
The reason is simple: on a time interval $[t_k,t_{k+1})$, a typical step of Brownian motion moves by
$\CO(\sqrt {t_{k+1}-t_k})$. Therefore, unlike for \eref{vqmc}, for both \eqref{sint} and \eqref{reV}, 
the typical size of $\v(x,y)$ is of order $\sqrt {t_{k+1}-t_k}$. As a consequence, at each step,
the probability that a particle either dies or spawns a child is itself of order $\sqrt {t_{k+1}-t_k}$. 
Without modification, as $t_{k+1}-t_k \to 0$, the fluctuations in the number of particles, $N_t$, will grow wildly and the process will die out before a time of order $1$ with higher and higher 
probability, thus causing the variance of our estimator to explode.  This phenomenon is already evident in the simple case of a Brownian motion,
\begin{equation}\label{cs1}
y_{(k+1)\eps} = y_{k\eps} + \sqrt{\eps} \xi_{k+1}\;,\qquad X(0) = 0\;,
\end{equation}
with 
\begin{equation}\label{cs2}
\v(x,y) = y-x
\end{equation}
(a choice consistent with either \eqref{sint} or \eqref{reV}). 
Here the $\xi_{k}$ are independent mean 0 and variance 1 Gaussian random variables.
This example (with more general $\xi_k$) is analysed in great depth in the companion paper \cite{brownianfan}.  

Figure \ref{nsquared} demonstrates the dramatic failure of the straightforward generalisation of DMC in Algorithm~\ref{dmc} on this simple example.  There we plot the logarithm of the second moment of the number of particles in the ensemble at time
$1$ (i.e. after $1/\eps$ steps of the algorithm) versus $-\log \eps$ for several values of $\eps$.  Clearly, $\mathbf{E}\left( N_{1}^2\right)$ is growing rapidly as $\eps$ decreases.  
Indeed, for small enough $\eps$ and  starting with a single initial copy at the origin, one can exploit the conditional independence of offspring in Algorithm~\ref{dmc} to show that for the simple random walk example, $\mathbf{E}\left[ N^2_{k\eps}\right] \geq \mathbf{E}\left[ N^2_{(k-1)\eps}\right]+\alpha \sqrt{\eps}$ for a positive constant $\alpha.$ After iterating $\eps^{-1}$ times, this bound gives exactly the growth of $\mathbf{E}\left[ N^2_{1}\right]$ illustrated in Figure \ref{nsquared} (i.e. $\mathcal{O}\left(\eps^{-1/2}\right)$).
This instability can be removed by carrying out the branching steps (Steps  3-5) only every $\mathcal{O}(1)$ units of time instead of every $\eps$ units of time and accumulating weights for the particles in the intervening time intervals.   However, this small $\eps$ regime cleanly highlights a serious (and unnecessary) deficiency in DMC.  In contrast to Algorithm~\ref{dmc}, the method that we will describe in Algorithm~\ref{tdmc} below, appears  to be stable as $\eps$ vanishes (a property confirmed in \cite{brownianfan}).

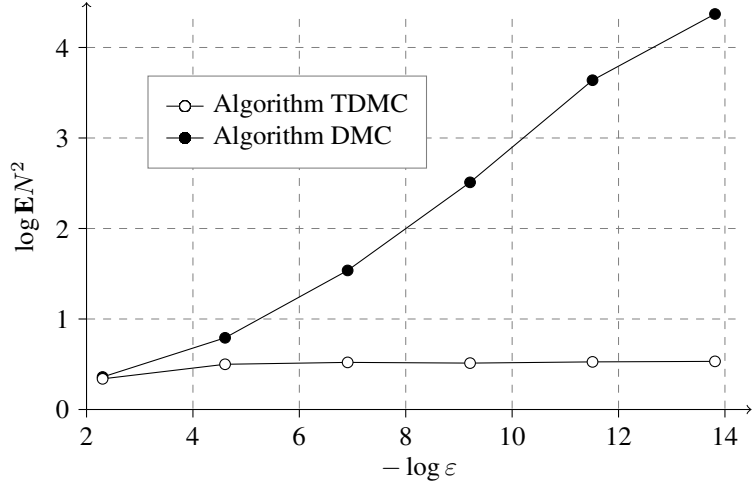
\begin{figure}\label{nsquared}
\begin{center}
\begin{tikzpicture}[y=1.2cm, x=.7cm]
	\draw[->] (2,0) -- coordinate (x axis mid) (14.5,0);
    	\draw[->] (2,0) -- coordinate (y axis mid) (2,4.5);
      	\foreach \x in {2,4,...,14}
     		\draw (\x,0pt) -- (\x,-3pt)
			node[anchor=north] {\x};
    	\foreach \y in {0,...,4}
     		\draw[xshift=1.4cm] (0pt,\y) -- (-3pt,\y) 
     			node[anchor=east] {\y}; 
	\node[below=0.5cm] at (x axis mid) {$-\log \eps$};
	\node[rotate=90, above=0.5cm] at (y axis mid) {$\log \E N^2$};

	\draw plot[mark=*] 
		file {DMC_Test.data};
	\draw plot[mark=*, mark options={fill=white} ] 
		file {TDMC_Test.data};
    
	
	\begin{scope}[shift={(3.5,3)}]
	\draw (0,0) node (dummy) {} (0,0) -- 
		plot[mark=*] (0.35,0) -- (0.7,0) 
		node[right] (alg1) {Algorithm~\ref{dmc}};
	\draw[yshift=\baselineskip] (0,0) -- 
		plot[mark=*, mark options={fill=white}] (0.35,0) -- (0.7,0)
		node[right] (alg2) {Algorithm~\ref{tdmc}};
	\end{scope}

	\begin{pgfonlayer}{background}
  	\draw[very thin,color=gray,dashed,xstep=2,ystep=1] (2.01,0.01) grid (14.2,4.4);
    	\node [fill=white,draw=gray,fit=(alg1) (alg2) (dummy)] {};
  	\end{pgfonlayer}

\end{tikzpicture}
\end{center}
\vspace{-1em}\caption{Second moment of $N$ versus stepsize.}
\end{figure}

Our modification of DMC described in Algorithm~\ref{tdmc} below not only suppresses  the small $\eps$ instability just described, but results in a more robust and efficient algorithm in any context.
Inspired by the RESTART and DPR algorithms studied in \cite{Villen-Altamirano1991,HarasztiTownsend1999,MR2833612} we
append to each particle a ``ticket'' $\theta$ limiting the region that the particle is allowed to visit.  Samples are eliminated when and only when they violate their ticket values. As a consequence, particles typically survive for a much longer time, and in many
situations there is a well-defined limiting algorithm when $\eps \to 0$.
More precisely, the modified algorithm proceeds as follows:
\begin{algorithm}{TDMC}\label{tdmc}
Ticketed DMC
{\tt
\begin{enumerate}
\item Begin with $M$ copies $x^{(j)}_0 = x_0$.  For each $j=1,\dots,M$ choose an independent random variable
$\theta^{(j)}_0\sim \mathcal{U}(0,1)$.
\item At step $k$  there are $N_{t_k}$ samples $(x^{(j)}_{t_k},\theta^{(j)}_{t_k})$.  Evolve each of the $x^{(j)}_{t_k}$ one step to generate
$N_{t_k}$ values  \[\tilde x^{(j)}_{t_k}\sim \mathbf{P}\bigl( y_{t_{k+1}}\in dx\,|\, y_{t_k}=x^{(j)}_{t_k}\bigr).\]
\item For each $j=1,\dots, N_{t_k}$, let
\begin{equ}[e:defP]
P^{(j)} = e^{-\v(x^{(j)}_{t_k},\tilde x^{(j)}_{t_{k+1}})}.
\end{equ}
If $P^{(j)} < \theta^{(j)}_{t_k}$ then
set 
\[
N^{(j)} = 0.
\]
If $P^{(j)} \geq \theta^{(j)}_{t_k}$ then set
\begin{equ}[e:noff]
N^{(j)} = \max\{\lfloor P^{(j)} + u^{(j)} \rfloor , 1\}\;,
\end{equ}
where $u^{(j)}$ are independent $\mathcal{U}(0,1)$ random variables.
\item For  $j=1,\dots, N_{t_k}$, if $N^{(j)}>0$ set 
\[
x^{(j,1)}_{t_{k+1}} = \tilde x^{(j)}_{t_{k+1}}\quad\text{and}\quad \theta^{(j,1)}_{t_{k+1}} = \frac{\theta^{(j)}_{t_k}}{P^{(j)}}
\]
and for $i=2,\dots,N^{(j)}$ 
\[
x^{(j,i)}_{t_{k+1}} = \tilde x^{(j)}_{t_{k+1}}\quad\text{and}\quad  \theta^{(j,i)}_{t_{k+1}}\sim  \mathcal{U}((P^{(j)})^{-1},1).  
\]
\item  Finally set $N_{t_{k+1}} = \sum_{j=1}^{N_{t_k}} N^{(j)}$ and list the $N_{t_{k+1}}$ vectors 
$\bigl\{x^{(j,i)}_{t_{k+1}}\bigr\}$  as $\bigl\{x^{(j)}_{t_{k+1}}\bigr\}_{j=1}^{N_{t_{k+1}}}$.
\item
At time $t$ produce the estimate
\[
 \widehat  f _t  = \frac{1}{M}\sum_{j=1}^{N_t} f(x^{(j)}_t)\;.
 \]
\end{enumerate}
}\end{algorithm}

\begin{remark}
Both here and in Algorithm~\ref{dmc}, we could have replaced the random variable $\lfloor P^{(j)} + u^{(j)} \rfloor$ by 
any integer-valued random variable with mean $P^{(j)}$. The choice given here is natural, since it
is the one that minimises the variance. From a mathematical perspective, the analysis would have been slightly easier
if we chose instead to use Poisson distributed random variables.
\end{remark}

We should first establish that this new algorithm ``does the job'' in the sense that if  the steps in 
Algorithm~\ref{dmc} are replaced
by those in Algorithm~\ref{tdmc}, the resulting ensemble of particles still produces an unbiased estimate of the quantity $\langle f\rangle_t$ 
defined in \eqref{discavet2}.  This is the subject of Theorem~\ref{theorem:mean} below 
which establishes that, indeed, for any bounded test function $f$ and any choice of $\v,$ 
\begin{equation}\label{mean2}
\mathbf{E} \widehat  f _{t} =  \langle f\rangle_{t}\;.
\end{equation}

We have already mentioned that a companion article \cite{brownianfan} is devoted to showing that our modification of DMC, Algorithm~\ref{tdmc}, does not suffer the small $\eps$ instability observed in Algorithm~\ref{dmc}.  In that asymptotic regime Algorithm~\ref{tdmc} dramatically outperforms the straightforward generalisation of DMC in Algorithm~\ref{dmc}.  However, a surprising side-effect of our modification is that Algorithm~\ref{tdmc} is superior to Algorithm~\ref{dmc} in all contexts. In this article we compare Algorithms~\ref{dmc} and \ref{tdmc} independently of a 
specific regime (discretisation stepsize, choice of $\v$, etc).  

The key tool in carrying out this comparison is Lemma~\ref{tdmc2dmc} in Section~\ref{sec:bias+dmc}.  That lemma establishes that by simply randomising all tickets uniformly between $0$ and 1 at each step of Algorithm~\ref{tdmc}, one obtains a process that, in law, is identical to the one generated by Algorithm~\ref{dmc}.
With this observation in hand we are able to establish in Theorem~\ref{theorem:varcompare} that
 the estimator produced by Algorithm~\ref{tdmc} always has lower variance than Algorithm~\ref{dmc}, i.e.\ we prove that
\begin{equation}\label{varcompare}
\mathop{\mathbf{var}} \widehat  f _t \leq \mathop{\mathbf{var}^1}  \widehat  f _t \;,
\end{equation}
holds for any bounded test function $f,$ any underlying Markov chain $Y$, and any choice of $\v$.
In expression \eqref{varcompare} we have again distinguished expectations under the rules of Algorithm~\ref{dmc} from all other expectations by a superscript 1.

In comparing Algorithms~\ref{dmc} and \ref{tdmc}, it is not enough to simply compare  the variances of the corresponding estimators: one should also compare their respective costs.  Since the dominant contribution to any cost difference in the two algorithms will come from differences in the number of times one must evolve a particle from one time step to the next we compare the expectations of \emph{workload} 
\[
\mathcal{W}_t = \sum_{t_k\leq t} N_{t_k}\;
\]
under the two rules.  However, by \eqref{mean2} with $f\equiv 1$ we see that
\[
\mathbf{E}^1 \mathcal{W}_t = \mathbf{E} \mathcal{W}_t =  M \sum_{t_k\leq t} \mathbf{E} \exp\Bigl(- \sum_{j=0}^{k-1}\v(y_{t_j},y_{t_{j+1}}) \Bigr)\;,
\]
so that the expected cost of both algorithms is the same.
In fact, the proof of Theorem~\ref{theorem:varcompare} can be modified slightly to show that the variance of the workload of Algorithm~\ref{tdmc} is always lower than the variance of workload of Algorithm~\ref{dmc}.
These remarks leave little room for ambiguity.  Algorithm~\ref{tdmc} is more efficient than Algorithm~\ref{dmc} in any setting.

The existence of a continuous-time limit and other robust, small-discretisation parameter characteristics established  in \cite{brownianfan} along with the other features of Algorithm~\ref{tdmc} mentioned above, strongly suggest that Algorithm~\ref{tdmc} has significant potential as a flexible and efficient computational tool.  In the next section we provide numerical evidence to that effect.

\section{Two examples}\label{sec:ex}
In the following subsections we apply our modified DMC algorithm to two simple problems.  In Section \ref{sec:re} we consider the approximation of a small probability related to the escape of a diffusion from a deep potential well.  We will use Algorithm~\ref{tdmc}
 to bias simulation of the diffusion so that an otherwise rare event (escape from the potential well) becomes common (and then reweight appropriately).  We will see that extremely low probability events can be computed with reasonably low relative error.  In Section \ref{sec:filter} we demonstrate the performance of Algorithm~\ref{tdmc} in one of DMCs most common application contexts, particle filtering.   We will reconstruct a hidden trajectory of a diffusion, given noisy but very frequent observations.  Our comparison shows that the reconstruction produced by the modified method is dramatically more accurate than the reconstruction produced by straightforward generalisation of DMC.

\subsection{Rare event simulation}
\label{sec:re}

In  Quantum Monte Carlo applications, the function $\v$ is (for the most part) specified by the potential $V$ as in \eqref{vqmc}.  In other applications however, there may 
be more freedom in the choice of $\v$
to achieve some specific goal.  As already mentioned earlier, the choice
$\v(x,y) = V(y) - V(x)$ turns Algorithm~\ref{tdmc} into an estimator of
\[
\langle f\rangle_t = 
e^{V(x_0)}\mathbf{E}\left(
f(y_t)e^{-V(y_t)}\right)\;.
\]
The design of a ``good'' choice of $V$ will be discussed below.
By redefining $f$ we find that $e^{-V(x_0)}\, \widehat {e^{V}f}_t$ is an unbiased estimate of $\mathbf{E}\left(f(y_t)\right)$,
i.e.
\[
e^{-V(x_0)}\,\mathbf{E} \sum_{j=1}^{N_t} e^{V(x^{(j)}_t)} f(x^{(j)}_t)  = \mathbf{E} f(y_t)\;,
\]
where the samples $x^{(j)}_t$ are generated as in Algorithm~\ref{tdmc}.  This suggests choosing $V$ to minimise the variance of
$ \widehat {e^{V}f}_t$.  Intuitively this involves choosing  $V$ to be smaller in regions where $f$ is significant.
The branching step in Algorithm~\ref{tdmc} will create more copies when  $V$ decreases, focusing computational resources in regions where $V$ is small. 

As an example, consider the overdamped Langevin dynamics 
\[
 dy_t = -\nabla U(y_t)dt + \sqrt{2\gamma}dB_t
\]
where $y$ represents the positions of seven two-dimensional particles (i.e. $y\in \mathbb{R}^{14}$)
and
\[
 U(x) = \sum_{i<j} 4\left(\lVert x_i -x_j\rVert^{-12}- \lVert x_i
-x_j\rVert^{-6}\right)\;.
\]
In this formula $x_i$ represents the position of the $i$th particle (i.e. $x_i\in \mathbb{R}^2$).
In various forms this Lennard-Jones cluster is a standard non-trivial test problem in rare event simulation (see \cite{DellagoBC1998}).

\begin{figure}
\centering
\begin{tikzpicture}[minimum size=8mm]
\foreach \x in {0,...,5} {
  \node at (60*\x:1cm) [circle,draw=black!80,fill=blue!10] {};
 }
  \node at (0,0) [circle,draw=black!80,fill=blue!40] {};

\def\mynode{\node[xshift=4cm+0.1cm*rand,yshift=0.1cm*rand]}

\foreach \x in {1,...,5} {
  \mynode at (60*\x:1cm) [circle,draw=black!80,fill=blue!10] {};
 }
  \mynode at (0,0) [circle,draw=black!80,fill=blue!10] {};
  \mynode at (1,0) [circle,draw=black!80,fill=blue!40] {};
 \end{tikzpicture}
\caption{({\bf Left}) The initial condition $x^A$ to generate the results in Table \ref{results}. ({\bf Right}) A typical sample of the event $y_{2}\in B$.}
\label{LJfig}
\end{figure}
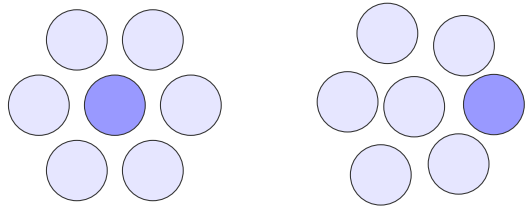
After discretising this process by the Euler scheme 
\[
y_{(k+1)\eps} =y_{k\eps} - \nabla U(y_{k\eps})\, \eps + \sqrt{2\gamma \eps}\, \xi_{k+1}
\]
 with a stepsize of $\eps = 10^{-4}$ we apply Algorithm
\ref{tdmc} with 
\[
 \v(x,y) = V(y)-V(x),\qquad V(x) =  \frac{\lambda}{kT} \min_{i\geq 2}\bigg\{ \bigg\lVert x_i - \frac{1}{7}\sum_j x_j\bigg\rVert\bigg\}.
\]
The properties of the Euler discretisation in the context of rare event simulation are discussed in \cite{EveWeare2012}.

  Our goal is to compute
\[
\mathbf{P}_{x^A}\left(y_{2}\in B\right),\qquad B = \left\{x:\, V(x)<0.1\right\}\;,
\]
where our initial configuration is given by $x^A_1 = (0,0)$ and for $j=2,3,\dots,7$, 
\[
x^A_j = \Bigl(\cos\frac{j\pi}{3},\sin\frac{j\pi}{3} \Bigr)
\]
(see Figure \ref{LJfig}).  Starting from initial configuration $x_0=x^A$, the particle initially at the centre of the cluster ($x^A_1$) will typically remain there for a length of time that increases exponentially in ${\gamma}^{-1}$. We are roughly  computing the probability that, in 2 units of time, the particle at the centre of the cluster exchanges positions with one of the particles in the outer shell.  This probability decreases exponentially in ${\gamma}^{-1}$ as
$\gamma\rightarrow 0$.
In this case 
\[
f(x) = \one_B(x)\;,
\]
where $\one_B$ is the indicator function of $B$.  In our simulations,
$\lambda$ is chosen so that the expected number of particles ending in $B$, $\mathbf{E}\widehat f_2$, is
(very) roughly 1.  The results for several values of the temperature $\gamma$ are displayed in Table \ref{results}.

 \begin{table}
\centering
\begin{tabular*}{\hsize}{@{\extracolsep{\fill}}l l l l l l}
\quad $\gamma$ & $\lambda$ & estimate & workload &
$\frac{1}{2}\left(\substack{\text{variance}\\ \times\text{workload}}\right)$ &
$\substack{\text{brute force}\\ \text{variance}}$
\quad \\[1ex]
\hline
\quad 0.4  & 1.9 & 1.125$\times 10^{-2}$   & 12.90  &  4.732$\times 10^{-3}$  & 1.112$\times 10^{-2}$ \quad\\
\quad 0.2 & 1.3 & 2.340$\times 10^{-3}$ & 11.64 &2.344$\times 10^{-4}$ & 2.335$\times
10^{-3}$ \quad\\
\quad 0.1 & 1 & 7.723$\times 10^{-5}$& 13.87 & 7.473$\times 10^{-7}$ & 7.722$\times
10^{-5}$ \quad \\
\quad 0.05 & 0.85 & 9.290$\times 10^{-8}$ & 30.84 & 1.002$\times
10^{-11}$   &   9.290$\times 10^{-8}$   \quad \\
\quad 0.025 & 0.8 & 1.129$\times 10^{-13}$& 204.8 & 1.311$\times
10^{-21}$   &   1.129$\times 10^{-13}$   \quad \\
\hline
\end{tabular*}
\caption{Performance of Algorithm~\ref{tdmc} on the Lennard-Jones cluster problem described in 
Section \ref{sec:re} at different temperatures ($\gamma$).  A measure of the efficiency improvement of Algorithm~\ref{tdmc} over straightforward simulation is obtained by taking the ratio of the last two columns}\label{results}
\end{table}


The workload referenced in Table \ref{results} is the (scaled) expected total number of $\nabla U$ evaluations per sample, i.e. the expectation of 
\[
\eps \mathcal{W}_2 = {\eps}\sum_{k=1}^{2/\eps} N_{k\eps}.
\]
Note that when $V\equiv 0$, Algorithm~\ref{tdmc} reduces to straightforward (brute force) simulation and the workload is exactly 2.  
  Increasing $\lambda$ results in the creation of more particles.  This has the competing effects of increasing the workload on the one hand but reducing the variance of the resulting estimator on the other.  If we let $\sigma^2$ denote the variance of one sample run of Algorithm~\ref{tdmc} (i.e.  $\mathbf{var}\, e^{-V(x^A)}\widehat {e^{V}f}_{2}$ with $M=1$)
 and $\sigma^2_{bf}$ the variance of the random variable $f(y_{2})$, then the number of samples required to compute an estimate with a statistical error of $err$ is $M = \sigma^2/err$ and $M_{bf} = \sigma^2_{bf}/err$ for
 Algorithm~\ref{tdmc} and brute force respectively.  Taking account of the expected cost (scaled by $\eps$) per sample of Algorithm~\ref{tdmc}
(reported in the ``workload'' column) 
and of brute force simulation (identically 2) we can obtain a comparison of the cost of Algorithm~\ref{tdmc} and brute force by comparing the brute force variance to the product of the variance of the estimate produced by Algorithm~\ref{tdmc} and one-half the workload reported in the fourth column of the table.   These quantities are reported in the last two columns of the table.  By taking the ratio of the two columns we obtain a measure of the relative cost per degree of accuracy of the two methods.
    One can see that  the speedup provided by Algorithm~\ref{tdmc} becomes dramatic for smaller values of $\gamma$.  The  variance of the brute force  estimator is computed from the values in the third column  by
  \[
  \text{brute force variance} = \mathbf{P}_{x^A}\left(y_{2}\in B\right)\left( 1- \mathbf{P}_{x^A}\left(y_{2}\in B\right)\right).
  \]
    Only the estimates in the first two rows of Table \ref{results} were compared against straightforward simulation.  Those estimates agreed to the appropriate precision.

The choice of $V$ used in the above simulations is certainly far from optimal.   Given the similarities in this rare event context between Algorithm~\ref{tdmc} and the DPR algorithm, it should be straightforward to adapt the 
results of Dean and Dupuis in \cite{MR2833612} identifying optimal choices of $V$ in the small $\gamma$ limit.  In fact, we suspect (but do not prove) that, at least when the step-size parameter is small, the optimal choice of $V$ at any temperature is given by the time-dependent function
\[
V(k,x) = - \log \mathbf{P}\left( y_{2} \in B \, |\, y_{k\eps} = x\right).
\]
This choice is clearly impractical as it requires knowing the probability that we are trying to compute.  Even asymptotic estimates based on taking an appropriate low $\gamma$ limit of this choice are often not practical so it is worth noting that, even with a rather clumsy choice of $V$, our scheme is still able to produce impressive results to fairly low temperatures.  We fully expect however that without a very carefully chosen $V$, the cost of achieving a fixed relative accuracy for our scheme will grow exponentially with $\gamma^{-1}$ as $\gamma\rightarrow 0$.   This characteristic is common in rare event simulation methods.

\subsection{Non-linear filtering}\label{sec:filter}

In this section our goal is to reconstruct a  sample path of the solution to the stochastic differential equation
\begin{align}\label{lorenz}
dy_t^{(1)} &= 10(y_t^{(1)}-y_t^{(2)})\,dt   + \sqrt{2} dB_t^{(1)}\;,\\
dy_t^{(2)} &= \bigl(y_t^{(1)}(28-y_t^{(3)})- y_t^{(2)}\bigr) \,dt  + \sqrt{2} dB_t^{(2)}\;, \\
dy_t^{(3)} & = \Bigl(y_t^{(1)}y_t^{(2)} - \frac{8}{3} y_t^{(3)}\Bigr)\,dt + \sqrt{2} dB_t^{(3)}\;, 
\end{align}
with
\[
y_0 = \left(\begin{array}{c}
-5.91652\\
-5.52332\\
24.57231
\end{array}
\right)\;.
\]
The deterministic version of this system is the famous Lorenz 63 chaotic ordinary differential equation \cite{Lorenz}.  The stochastic version above is commonly used in tests of on-line filtering strategies (see e.g. \cite{MR1271676,chorin}).

The path of $y_t$ is revealed only through the 3-dimensional noisy observation process
\begin{equation}\label{lorenzobs}
dh_t = 
y_t\, dt
+ 0.1\, d\tilde B_t\;
\end{equation}
where the 3-dimensional Brownian motion $\tilde B_t$ is independent of $B_t$.
Our task is to reconstruct the path of $y_t$ given a realisation of $h_t$.  More precisely, our goal is to sample from (and compute moments of) the conditional distribution of $y_t$ given $\mathcal{F}^h_t$, where
 $\mathcal{F}^h$ is the filtration generated by $h$.  One can verify that expectations with respect to this conditional distribution can be written as
 \begin{equation}\label{exactfilter}
 \frac{\mathbf{E}^B \left( f(y_t) \exp\Bigl( - 10 \int_0^t \langle y_s, dh(s)\rangle- 50 \int_0^t \lVert y_s\rVert^2 ds \Bigr) \right)   }
 {\mathbf{E}^B \exp\Bigl( - 10 \int_0^t \langle y_s, dh_s\rangle- 50 \int_0^t \lVert y_s\rVert^2 ds \Bigr) }\;,
 \end{equation}
 where the superscript $B$ on the expectations indicates that they are expectations over $B_t$ only and not over $\tilde B_t$, i.e.\ the trajectory of $h_t$ is fixed.
 We will focus on estimating the mean of this distribution ($f(x) = x$) which we will refer to as the reconstruction of the hidden signal. 

As always, we should first discretise the problem.  The simplest choice is to again replace \eqref{lorenz}
by its Euler discretisation with parameter $\eps$.  
The observation process \eqref{lorenzobs} can be replaced by
\begin{equation}\label{lorenzobsdisc}
  h_{(k+1)\eps} - h_{k\eps} \approx
y_{k\eps}\,\eps
+ 0.1\sqrt{\eps}\, \eta_{k+1}
\end{equation}
where the $\eta_k$ are independent Gaussian random variables with mean $0$ and identity covariance.  We again set $\eps=10^{-4}$. With this choice for $h_t$, it is equivalent to assume that the observation process is the sequence of increments $ h_{(k+1)\eps} -  h_{k\eps}$  instead of $H$ itself. 
To emphasise that we will be conditioning on the values of $h_{(k+1)\eps} -  h_{k\eps}$ and not computing expectations with respect to these variables we will use the notation 
$\triangle_k$ to denote a specific realisation of  $ h_{(k+1)\eps} -  h_{k\eps}$. In Figure \ref{fig:lorenz} we plot a trajectory of the resulting discrete process $y$ and observations $h_{(k+1)\eps} -  h_{k\eps}$.  
\begin{figure}
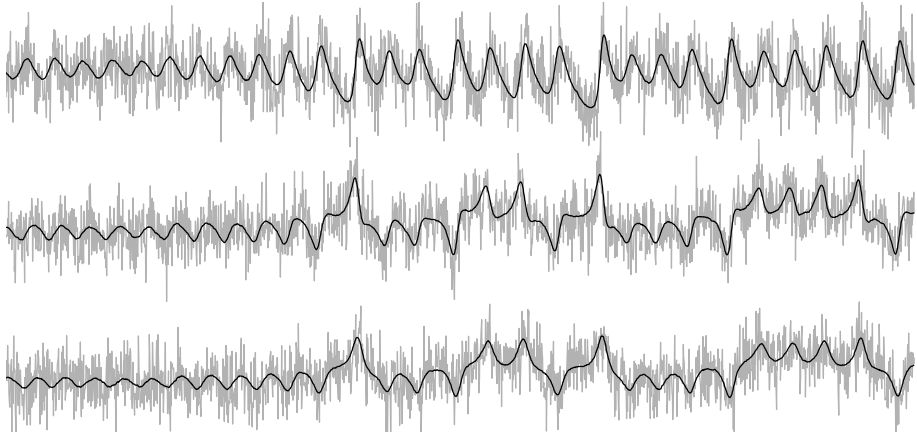
\label{fig:lorenz}
\centering
\mhpastefig{Observation}
\caption{Hidden signal (black), together with the observed signal in gray.}
\end{figure}

Note that given a particular state $y_{k\eps}=x$, the probability of observing $h_{(k+1)\eps} -  h_{k\eps}=\triangle$ is
\[
\exp\left( - \frac{\lVert x\eps- \triangle\rVert^2}{0.02\eps}\right)
\]
and expectations of the discretised process $y$ given the $h_{(k+1)\eps} -  h_{k\eps} = \triangle_k$ observations can be computed by the formula
\begin{equation}\label{discfilter}
Z_k^{-1} \mathbf{E}\biggl( y_{k\eps} \exp\Bigl(- \sum_{j=1}^{k}  \frac{\lVert y_{j\eps}\,\eps- \triangle_j\rVert^2}{0.02\eps}\Bigr) \biggr)\;,
\end{equation}
where the normalisation constant $Z_k$ is given by
\[
Z_k = {\mathbf{E}  \exp\Bigl(- \sum_{j=1}^{k}  \frac{\lVert y_{j\eps}\,\eps- \triangle_j\rVert^2}{0.02\eps}\Bigr) }\;.
\]
In the small $\eps$ limit, formula \eqref{discfilter} (with $k = t/\eps$)  indeed converges to \eqref{exactfilter}  (see \cite{Crisan2011}). 

Expression \eqref{discfilter} suggests applying Algorithms \ref{dmc} or \ref{tdmc} with 
\[
\v(x,y) = \frac{\lVert y\eps-\triangle_{k+1}\rVert^2}{0.02\eps}
\]
at each time step.  Below it will be convenient to consider the resulting choice of $P^{(i)}$,
\[
P^{(i)} = \exp\left( - \frac{\lVert \tilde x^{(i)}_{(k+1)\eps}\eps- \triangle_{k+1}\rVert^2}{0.02\eps}\right)
\]
in Algorithm~\ref{tdmc}.  With this choice, the expected number of particles at the $k$th step would be
\[
M {\mathbf{E}\exp\Bigl(- \sum_{j=1}^{k}  \frac{\lVert y_{j\eps}\,\eps- \triangle_j\rVert^2}{0.02\eps}\Bigr) }\;,
\]
a quantity that my decay very rapidly with $k$.  Since our goal is to compute conditional expectations we are free to normalise $P^{(i)}$ by
any constant.  A tempting choice is 
\[
P^{(i)} =\frac{Z_k}{Z_{k+1}}\exp\left( - \frac{\lVert \tilde x^{(i)}_{(k+1)\eps}\eps- \triangle_{k+1}\rVert^2}{0.02\eps}\right)
\]
which would result in $\mathbf{E} N_{k\eps} = M$ for all $k$.
Unfortunately we do not know the conditional expectation in the denominator of this expression.   However, for a large number of particles $N_{k\eps}$ we have the approximation
\begin{equs}
\frac{1}{N_{k\eps}}\sum_{i=1}^{N_{k\eps}}  \exp&\left( - \frac{\lVert \tilde x^{(i)}_{(k+1)\eps}\eps-\triangle_{k+1}\rVert^2}{0.02\eps}\right)\\
&\approx \mathbf{E}\Bigl( \exp \Bigl( - \frac{\lVert y_{(k+1)\eps}\eps-\triangle_{k+1}\rVert^2}{0.02\eps}\Bigr) \,\Big|\, \triangle_1,\dots,\triangle_k\Bigr) = \frac{Z_{k+1}}{Z_k}.
\end{equs}
  This suggests using
\begin{equation}\label{Pfirst}
P^{(i)} = \frac{\exp\left( - \frac{\lVert \tilde x^{(i)}_{(k+1)\eps}\eps- \triangle_{k+1}\rVert^2}{0.02\eps}\right)}{\frac{1}{N_{k\eps}}\sum_{l=1}^{N_{k\eps}}  \exp\left( - \frac{\lVert \tilde x^{(l)}_{(k+1)\eps}\eps- \triangle_{k+1}\rVert^2}{0.02\eps}\right)}\;,
\end{equation}
which will guaranty that $\mathbf{E} N_{k\eps} = M$ for all $k$.

In fact, in practical applications it is important to have even more control over the population size.  
Stronger control on the number of particles can be achieved in many ways (e.g. by resampling strategies \cite{defreitas05, liu02}).  We choose to make the simple modification
\begin{equation}\label{Psecond}
P^{(i)} = M\frac{\exp\left( - \frac{\lVert \tilde x^{(i)}_{(k+1)\eps}\eps-\triangle_{k+1}\rVert^2}{0.02\eps}\right)}{\sum_{l=1}^{N_{k\eps}}  \exp\left( - \frac{\lVert \tilde x^{(l)}_{(k+1)\eps }\eps- \triangle_{k+1}\rVert^2}{0.02\eps}\right)}\;,
\end{equation}
which results in an expected number of particles at step $k+1$ of $M$ independently of the details of the step $k$ ensemble.  Formula \eqref{Psecond} depends on the entire ensemble of walkers and does not fit strictly within the confines of Algorithms \ref{dmc} and \ref{tdmc}.  This choice will lead to estimators with a small, $M$-dependent, bias.  
All sequential Monte Carlo strategies with ensemble population control  for computing conditional expectations that we are aware of suffer a similar bias.

\begin{figure}
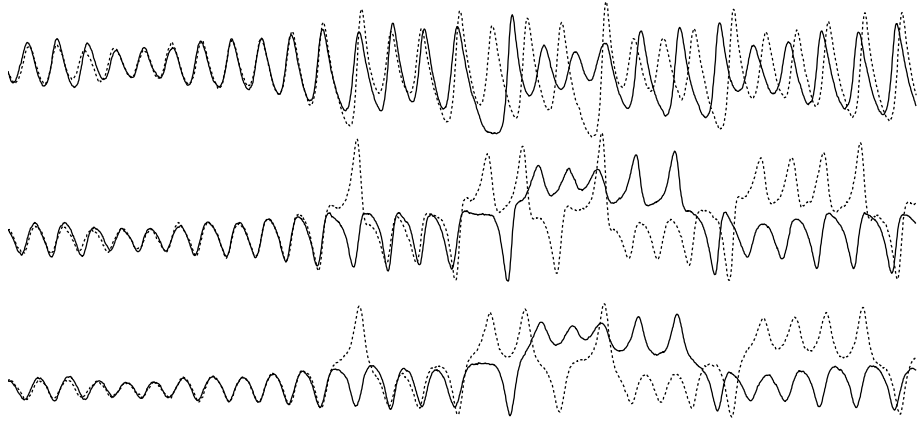

\centering
\mhpastefig{BadFilter}
\caption{Reconstruction of the three components of the signal using Algorithm~\ref{dmc}. The hidden signal is shown as a dotted line.}\label{fig:dmcfilter}
\end{figure}

\begin{figure}
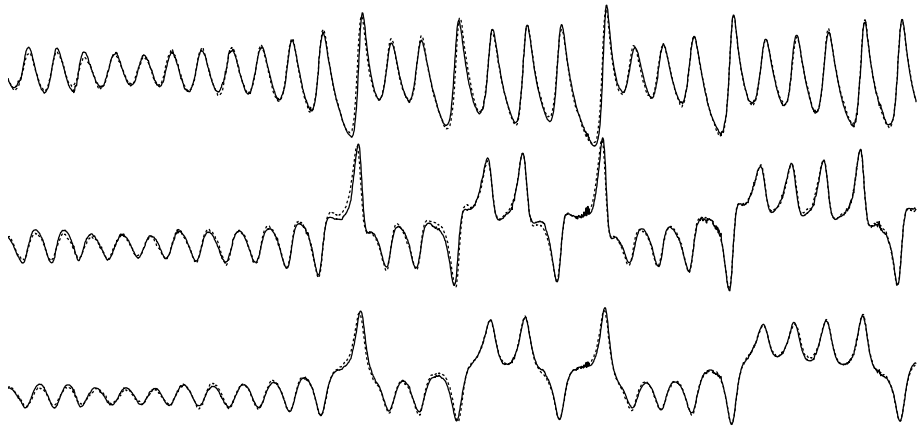

\centering
\mhpastefig{GoodFilter}
\caption{Reconstruction of the three components of the signal using Algorithm~\ref{tdmc}. The hidden signal is shown as a dotted line.}\label{fig:tdmcfilter}
\end{figure}

The results of a test of Algorithm~\ref{tdmc} with this choice of $P^{(i)}$ are presented in Figure \ref{fig:tdmcfilter} for $M=10$.  The true trajectory of $y$ is drawn as a dotted line, while our reconstruction is drawn as a solid line.  Note that the dotted line is nearly completely hidden behind the solid line, indicating an accurate reconstruction.  In Figure \ref{fig:dmcfilter} we show the results of the same test with Algorithm~\ref{tdmc} replaced by Algorithm~\ref{dmc}.  The method obtain in this way from Algorithm~\ref{dmc} is very similar to a standard particle filter with the common residual resampling step (see \cite{liu02}).  

With our small choice of $\eps$, one might expect that the number of particles generated by Algorithm~\ref{dmc} would explode or vanish almost instantly.  Our choice of $P^{(i)}$ prevents large fluctuations in $N$.  We expect instead that  the number of truly distinguishable particles in the ensemble (in the sense that they are not very nearly in the same positions) will drop to $1$ instantly, leading to a very low resolution scheme and a poor reconstruction.  This is supported by the results shown in Figure \ref{fig:dmcfilter}.

The fact that we obtain such a good reconstruction in Figure \ref{fig:tdmcfilter} with only 10 particles 
indicates that this is not a particularly challenging filtering problem.  Challenging filtering problems and more involved methods for dealing with them are discussed in \cite{EveWeare2012b}.  The purpose of this example is to demonstrate that by simply removing unnecessary variance from a particle filter (via Algorithm~\ref{tdmc}) one can improve performance.  
This improvement is illustrated dramatically in our small $\eps$ setting.  In practice it is advisable to carry out the copying and killing steps in Algorithm~\ref{tdmc} less frequently, accumulating particle weights in the intervening time intervals.  Nonetheless, any variance in the resulting estimate will be unnecessarily amplified if one uses a method based on Algorithm~\ref{dmc} instead of Algorithm~\ref{tdmc}.

\section{Bias and comparison of Algorithms \ref{dmc} and \ref{tdmc}}\label{sec:bias+dmc}

In this section we establish several general properties of Algorithms \ref{dmc} and \ref{tdmc}.   In contrast to later sections we do not focus here on any 
specific choice of the Markov process $y$ or the function $\v$.   
For simplicity in our displays we will assume in this section that $y$ is time-homogenous, i.e. that its transition probability distribution is independent of time.   As above  we use $\mathbf{E}^1$ and $\mathbf{E}$ to denote expectations under the rules of  Algorithms \ref{dmc} and \ref{tdmc} respectively.  In the proof of each result we will assume that $M=1$ in Algorithms \ref{dmc} and \ref{tdmc}.  The results follow for $M>1$ by the independence of the processes generated from each copy of the initial configuration.  To keep the formulas slightly more compact and because the results in these section do not pertain to the continuous time limit, we will replace our subscript $t_k$ notation with a subscript $k$ (and $t$ is now a non-negative integer).

Our first main result in this section establishes that Algorithms \ref{dmc} and \ref{tdmc} are unbiased in the following sense.
\begin{theorem}\label{theorem:mean}
Algorithm~\ref{tdmc} produces an unbiased estimator, namely
\begin{equation}\label{mean}
\mathbf{E} \widehat  f _t  = \mathbf{E}\biggl( f(y_t)\, \exp\Bigl({-\sum_{k=0}^{t-1} \v(y_{k},y_{{k+1}})}\Bigr) \biggr)\;.
\end{equation}
The  expression also holds with $\mathbf{E}$ replaced by $\mathbf{E}^1$.
\end{theorem}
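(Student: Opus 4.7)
The plan is to prove the claim for $M=1$ and deduce the general case from independence of the $M$ initial copies at the end. Define the look-ahead function
\[
g_k(x) = \mathbf{E}\Bigl(f(y_t)\exp\bigl(-\textstyle\sum_{j=k}^{t-1}\v(y_j,y_{j+1})\bigr) \,\Big|\, y_k = x\Bigr),
\]
so that $g_t = f$ and the quantity to be reproduced is $g_0(x_0) = \langle f\rangle_t$.

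For Algorithm~\ref{dmc} the argument is a short martingale computation: the process $S_k := \sum_{j=1}^{N_k} g_k(x^{(j)}_k)$ is a martingale because $\mathbf{E}^1(N^{(j)} \mid \mathcal{F}_k, \tilde x^{(j)}_{k+1}) = P^{(j)} = e^{-\v(x^{(j)}_k, \tilde x^{(j)}_{k+1})}$, and averaging $P^{(j)} g_{k+1}(\tilde x^{(j)}_{k+1})$ over one step of the underlying chain recovers $g_k(x^{(j)}_k)$. Taking $k = t$ yields $\mathbf{E}^1 \widehat f_t = S_0 = g_0(x_0) = \langle f\rangle_t$.

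For Algorithm~\ref{tdmc} the survival of each particle depends on its ticket, so I would track the finer quantity
\[
G(k, x, \theta) = \mathbf{E}\biggl(\sum_{j \in \mathcal{D}_k} f(x^{(j)}_t)\biggr),
\]
where $\mathcal{D}_k$ denotes the (random) set of descendants at time $t$ of a single particle started at step $k$ with state $x$ and ticket $\theta$, the expectation being over the forward randomness of Algorithm~\ref{tdmc}. The goal is to establish by backwards induction on $k$ the identity
\[
\int_0^1 G(k, x, \theta)\, d\theta = g_k(x);
\]
the $k = 0$ case, together with the fact that $\theta_0 \sim \mathcal{U}(0,1)$ is independent of everything else, then delivers $\mathbf{E} \widehat f_t = \int_0^1 G(0, x_0, \theta)\, d\theta = g_0(x_0)$. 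The base case $k = t$ is trivial since $G(t, x, \theta) = f(x) = g_t(x)$. For the inductive step one conditions on $\tilde x \sim \mathbf{P}(y_{k+1} \in \cdot \mid y_k = x)$ and splits according to the sign of $P - 1$ with $P := e^{-\v(x, \tilde x)}$. In the case $P \leq 1$ the particle survives iff $\theta \leq P$, produces exactly one offspring with ticket $\theta/P$, and the change of variables $\theta' = \theta/P$ transforms $\int_0^P G(k+1, \tilde x, \theta/P)\, d\theta$ into $P \int_0^1 G(k+1, \tilde x, \theta')\, d\theta' = P\, g_{k+1}(\tilde x)$. In the case $P > 1$ the particle always survives: the inherited offspring carries a ticket uniform on $(0, 1/P)$, while the additional offspring (their number $N^{(j)} - 1$ having mean $P - 1$) each carry an independent ticket uniform on $(1/P, 1)$; a short calculation shows that the inherited piece contributes $P \int_0^{1/P} G(k+1, \tilde x, \theta')\, d\theta'$ and the extras contribute $(P - 1) \cdot \frac{P}{P - 1} \int_{1/P}^1 G(k+1, \tilde x, \theta')\, d\theta' = P \int_{1/P}^1 G(k+1, \tilde x, \theta')\, d\theta'$, and these glue to $P\, g_{k+1}(\tilde x)$. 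Taking the outer expectation over $\tilde x$ converts $P\, g_{k+1}(\tilde x) = e^{-\v(x, \tilde x)} g_{k+1}(\tilde x)$ into $g_k(x)$, closing the induction.

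The anticipated main obstacle is the bookkeeping in the $P > 1$ branch: the deterministic Jacobian $P$ for the inherited offspring and the expected count $P - 1$ of extra offspring (with ticket density $\frac{P}{P-1}$ on $(1/P, 1)$) are precisely calibrated so that the two contributions tile $\int_0^1 G(k+1, \tilde x, \theta')\, d\theta'$ with no overlap or gap; this is the design feature of the ticketed scheme. Passage to $M > 1$ is immediate by linearity and independence, since the $M$ initial ticketed particles generate independent subtrees, each of which contributes $g_0(x_0)$ in expectation, so that $\mathbf{E} \widehat f_t = \frac{1}{M} \cdot M \cdot g_0(x_0) = \langle f\rangle_t$.
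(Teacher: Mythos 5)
Your proof is correct and follows essentially the same route as the paper: your one-step computation in the two cases $P\le 1$ and $P>1$ (inherited ticket $\theta/P$ contributing $P\int_0^{1/P}$, extras with mean count $P-1$ and density $P/(P-1)$ on $(1/P,1)$ contributing $P\int_{1/P}^1$) is exactly the paper's Lemma~\ref{lemma:onestep} with $F=G(k+1,\cdot,\cdot)$, and the induction on the time index matches the paper's argument up to running it backwards from $t$ rather than forwards on the horizon length.
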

The proof of Theorem~\ref{theorem:mean} is similar to the proof of Theorem~3.3 in  \cite{MR2833612} and requires the following lemma.
\begin{lemma}\label{lemma:onestep}
 For any bounded, non-negative function  $F$ on $\mathbb{R}^d\times \mathbb{R}$ we have 
\begin{equation}\label{onestep1}
\mathbf{E}\Bigl( \sum_{j=1}^{N_1} F(x^{(j)}_1, \theta^{(j)}_1)\,\Big \vert\, x_0, \tilde  x^{(1)}_1 \Bigr) =  e^{-\v(x_0,\tilde x^{(1)}_1)} \int_0^{1} F(\tilde x^{(1)}_1, u)\,du\;.
\end{equation}
The  expression also holds with $\mathbf{E}$ replaced by $\mathbf{E}^1$.
\end{lemma}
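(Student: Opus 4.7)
The plan is to condition on $x_0$ and $\tilde x := \tilde x^{(1)}_1$, set $P := e^{-\v(x_0,\tilde x)}$, and exploit the fact that every offspring produced at step $1$ in Algorithm~\ref{tdmc} has position exactly $\tilde x$. Thus the sum inside the conditional expectation collapses to $\sum_{j=1}^{N_1} F(\tilde x, \theta^{(j)}_1)$, and the statement reduces to identifying the joint law of the (random) collection of tickets $\{\theta^{(j)}_1\}$ precisely enough to integrate $F(\tilde x,\cdot)$ against it. The target identity says this expected integral is $P\int_0^1 F(\tilde x,u)\,du$; equivalently, the random measure $\mu:=\sum_j \delta_{\theta^{(j)}_1}$ on $[0,1]$ has mean $P$ times Lebesgue measure.

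I would verify this by splitting into the two cases dictated by Step~3. If $P<1$, then with probability $1-P$ we have $\theta_0>P$ and $N_1=0$. With probability $P$ we have $\theta_0\le P$, in which case $P<1$ forces $N^{(1)}=1$ and the single offspring carries ticket $\theta_0/P$; conditional on $\theta_0\le P$ this is $\mathcal{U}(0,1)$, so a change of variables gives an expected contribution of $P\int_0^1 F(\tilde x,u)\,du$. If $P\ge 1$, then $\theta_0<1\le P$ so the particle always survives, $N^{(1)}=\lfloor P+u\rfloor$ with $\mathbf{E}[N^{(1)}]=P$, and the first offspring's ticket $\theta_0/P$ is $\mathcal{U}(0,1/P)$, contributing $P\int_0^{1/P} F(\tilde x,u)\,du$ in expectation. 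The remaining $N^{(1)}-1$ offspring receive i.i.d.\ $\mathcal{U}(1/P,1)$ tickets independent of $u$, and since $\mathbf{E}[N^{(1)}-1]=P-1$, their contribution is $(P-1)\cdot\frac{P}{P-1}\int_{1/P}^1 F(\tilde x,u)\,du = P\int_{1/P}^1 F(\tilde x,u)\,du$. Adding the two pieces gives the claim.

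For the $\mathbf{E}^1$ variant, I would interpret each offspring of Algorithm~\ref{dmc} as being assigned an independent $\mathcal{U}(0,1)$ ticket (the natural coupling that underlies Lemma~\ref{tdmc2dmc}). Then $\mathbf{E}^1[N^{(1)}]=P$ unconditionally, the tickets are i.i.d.\ uniform and independent of the count, and a one-line Wald-type computation yields $\mathbf{E}^1[\sum_j F(\tilde x,\theta^{(j)}_1)\mid x_0,\tilde x] = P\int_0^1 F(\tilde x,u)\,du$. The only nontrivial ingredient throughout is the elementary identity $\mathbf{E}[\lfloor P+u\rfloor]=P$ for $u\sim\mathcal{U}(0,1)$; the main bookkeeping obstacle is the $P\ge 1$ case, where the first offspring's ticket has a different conditional law (narrow uniform on $(0,1/P)$) from the others (uniform on $(1/P,1)$), and one must check that, weighted by the expected count of each type, these two laws precisely recombine into $P$ times Lebesgue measure on $[0,1]$.
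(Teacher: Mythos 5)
Your proposal is correct and follows essentially the same route as the paper's proof: both compute the one-step conditional expectation by accounting separately for the surviving first offspring (whose ticket $\theta_0/P$ has the rescaled uniform law) and the $N^{(1)}-1$ additional offspring with i.i.d.\ $\mathcal{U}(P^{-1},1)$ tickets, using $\mathbf{E}\lfloor P+u\rfloor = P$. The only difference is organisational -- you split into the cases $P<1$ and $P\ge 1$ while the paper writes a single formula with indicator functions -- and your treatment of the $\mathbf{E}^1$ case via the ticket coupling matches the paper's remark that a simpler version of the same argument applies.
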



\begin{proof}
From the description of the algorithm and, in particular the fact that given $x_0$, and $\tilde x^{(1)}_1$, the tickets are all independent of each other,
 we obtain
\begin{equs}
\mathbf{E}\biggl( \sum_{j=1}^{N_1} F(x^{(j)}_1, \theta^{(j)}_1)\,\bigg \vert\, x_0&, \tilde x^{(1)}_1\biggr) = 
\int_0^{1}F(\tilde x^{(1)}_1, u e^{\v(x_0,\tilde x^{(1)}_1)} )\,\mathbf{1}_{\bigl(e^{-\v(x_0,\tilde x^{(1)}_1)} \geq u\bigr)}\,du \\
& + 
\frac{e^{-\v(x_0,\tilde x^{(1)}_1)}-1}{1 - e^{\v(x_0,\tilde x^{(1)}_1)}}
\int_{e^{\v(x_0,\tilde x^{(1)}_1)}}^1F(\tilde x^{(1)}_1, u)\,du\, \mathbf{1}_{\left(\v(x_0,\tilde x^{(1)}_1) \leq 0\right)}\;.
\end{equs}
The first term on the right can be rewritten as
\begin{multline*}
e^{-\v(x_0,\tilde x^{(1)}_1)} \int_0^{e^{\v(x_0,\tilde x^{(1)}_1)}}F(\tilde x^{(1)}_1, u)\,du\,\mathbf{1}_{\left(\v(x_0,\tilde x^{(1)}_1)\leq 0\right)} \\
+ e^{-\v(x_0,\tilde x^{(1)}_1)} \int_0^{1}F(\tilde x^{(1)}_1, u)\,du\,\mathbf{1}_{\left( \v(x_0,\tilde x^{(1)}_1)>0\right)}
\end{multline*}
while the second is
\[
e^{-\v(x_0,\tilde x^{(1)}_1)} \int_{e^{\v(x_0,\tilde x^{(1)}_1)}}^1 F(\tilde x^{(1)}_1, u)\,du\, \mathbf{1}_{\left(\v(x_0,\tilde x^{(1)}_1)\leq 0\right)}\;.
\]
Combining terms we obtain
\begin{equs}
\mathbf{E}\biggl( \sum_{j=1}^{N_1} F(x^{(j)}_1, \theta^{(j)}_1)\,\bigg \vert\, x_0, \tilde x^{(1)}_1\biggr)
& =e^{-\v(x_0,\tilde x^{(1)}_1)} \int_0^{1}F(\tilde x^{(1)}_1, u)\,du\,\mathbf{1}_{\left(\v(x_0,\tilde x^{(1)}_1)\leq 0\right)} \\
&\qquad +     e^{-\v(x_0,\tilde x^{(1)}_1)} \int_0^{1}F(\tilde x^{(1)}_1, u)\,du\,\mathbf{1}_{\left( \v(x_0,\tilde x^{(1)}_1)>0\right)}        \\
&= e^{-\v(x_0,\tilde x^{(1)}_1)}  \int_0^{1} F(\tilde x^{(1)}_1, u)\,du\;,
\end{equs}
which establishes \eqref{onestep1}.
A similar (but simpler) argument proves the result for $\mathbf{E}$ replaced by $\mathbf{E}^1$.
\end{proof}

With Lemma~\ref{lemma:onestep} in hand we are ready to prove Theorem~\ref{theorem:mean}.
\begin{proof}[of Theorem~\ref{theorem:mean}]
First notice that by Lemma~\eqref{lemma:onestep}, we have the identity
\begin{equation}\label{onestep}
\mathbf{E} \sum_{j=1}^{N_1} F(x^{(j)}_1, \theta^{(j)}_1)  =  \mathbf{E}\biggl( e^{-\v(x_0, y_1)}  \int_0^{1} F(y_1, u)\,du\biggr)\;,
\end{equation}
for any function $F(x,\theta)$.
In particular, the required identity holds for $k=1$ by setting $F(x,\theta) = f(x)$.

Now we proceed by induction, assuming that expression \eqref{mean} holds for step $k-1$ and prove that the relation also holds at $k$.
Notice that, by the Markov property of the process,
\[
\mathbf{E} \sum_{j=1}^{N_{k}} f(x^{(j)}_{k})
= \mathbf{E}\Biggl(\sum_{j=1}^{N_1}\mathbf{E}_{x^{(j)}_1,\theta^{(j)}_1}\biggl(\sum_{j=1}^{N_{k-1}} f(x^{(j)}_{k-1})\biggr)\Biggr).
\]
Setting
\[
F(x,\theta) = \mathbf{E}_{x,\theta}\sum_{j=1}^{N_{k-1}} f(x^{(j)}_{k-1})
\]
in expression \eqref{onestep}, we obtain
\begin{align*}
\mathbf{E} \sum_{j=1}^{N_{k}} f(x^{(j)}_{k}) &=   \mathbf{E}\sum_{j=1}^{N_1} F(x^{(j)}_1,\theta^{(j)}_1)\\
& =  \mathbf{E}\biggl( e^{-\v(x_0, y_1)}  \int_0^{1} \mathbf{E}_{y_1, u}\biggl(\sum_{j=1}^{N_{k-1}} f(x^{(j)}_{k-1})\biggr)\,du\biggr)\;.
\end{align*}
According to the rule for generating the initial ticket we have that
\[
\int_0^{1} \mathbf{E}_{y_1, u} \biggl( \sum_{j=1}^{N_{k-1}} f(x^{(j)}_{k-1})\biggr) \,du 
=  \mathbf{E}_{y_1} \sum_{j=1}^{N_{k-1}} f(x^{(j)}_{k-1})
\]
and we have therefore shown that 
\[
\mathbf{E} \sum_{j=1}^{N_{k}} f(x^{(j)}_{k}) 
=  \mathbf{E}\biggl( e^{-\v(x_0, y_1)}  \mathbf{E}_{y_1}\Bigl(\sum_{j=1}^{N_{k-1}} f(x^{(j)}_{k-1})\Bigr)\biggr)\;.
\]
We can conclude, by our induction hypothesis, that
\begin{equs}
\mathbf{E}\sum_{j=1}^{N_{k}} f(x^{(j)}_{k})
&=
 \mathbf{E}\biggl( e^{-\v(x_0, y_1)}  \mathbf{E}_{y_1}\Bigl(  f(y_{k-1})e^{-\sum_{k=0}^{t-2} \v(y_{k},y_{k+1})}\Bigr)\biggr)\\
& =  \mathbf{E}\biggl( f(y_t) \exp\Bigl({-\sum_{k=0}^{t-1} \v(y_{k},y_{k+1})}\Bigr) \biggr)\;,
\end{equs}
and the proof is complete.  A similar argument proves the result for $\mathbf{E}$ replaced by $\mathbf{E}^1$.
\end{proof}

The second main result of this section compares the variance of the estimators generated by Algorithms \ref{dmc} and \ref{tdmc}.  We have that
\begin{theorem}\label{theorem:varcompare}
For  all bounded functions $f$, one has the inequality
\[
\mathbf{var} \widehat f_t \leq \mathbf{var}^1 \widehat  f_t  \;.
\]
The inequality is strict when $f$ is strictly positive.
\end{theorem}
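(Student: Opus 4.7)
The strategy exploits Lemma~\ref{tdmc2dmc}, which identifies Algorithm~\ref{dmc} in law with the process obtained from Algorithm~\ref{tdmc} by refreshing every ticket to an independent uniform random variable before each branching step. I plan to view DMC as TDMC augmented by extra independent ticket randomness, and then isolate the excess second moment it introduces via a telescoping tower argument.

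Place TDMC's intrinsic randomness $\omega$ and an independent family of uniforms $U=(U_{k,j})$ on a common probability space. For $0\le k\le t$, define the hybrid process $Y^{(k)}$ that overwrites the tickets using the $U$-variables at each of the first $k$ branching steps and then runs native TDMC from time $k$ onwards. Then $Y^{(0)}$ is TDMC and, by Lemma~\ref{tdmc2dmc}, $Y^{(t)}$ has the law of DMC. Setting $\phi(X)=M^{-1}\sum_j f(x^{(j)})$ and invoking Theorem~\ref{theorem:mean} for the common mean $\mathbf{E}[\phi(Y^{(k)}_t)]=\langle f\rangle_t$ at every $k$, the variance comparison reduces to the non-negativity of each summand in the telescope
\begin{equation*}
\mathbf{var}^{1}\widehat f_t-\mathbf{var}\,\widehat f_t \;=\; \sum_{k=1}^{t}\Bigl(\mathbf{E}\bigl[\phi(Y^{(k)}_t)^2\bigr]-\mathbf{E}\bigl[\phi(Y^{(k-1)}_t)^2\bigr]\Bigr).
\end{equation*}

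For fixed $k$, the hybrids $Y^{(k-1)}$ and $Y^{(k)}$ can be coupled to agree through time $k-1$ and both run native TDMC from time $k$ onwards; they differ only in whether tickets are refreshed immediately before the $(k-1)\to k$ branching step. Introducing the downstream second moment $W(X)=\mathbf{E}\bigl[\phi(X_t)^2\mid X_k=X\bigr]$ and using the Markov property, the $k$-th summand becomes the expectation, over the common history, of
\begin{equation*}
\int V(x,\theta)\,d\theta\;-\;V(x,\theta_{\mathrm{TDMC}}),\qquad V(x,\theta)=\mathbf{E}\bigl[W(X_k)\mid X_{k-1}=(x,\theta)\bigr],
\end{equation*}
with $\theta_{\mathrm{TDMC}}$ the ticket vector inherited from TDMC's preceding step and the integral taken with respect to uniform tickets on $[0,1]^{N_{k-1}}$.

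Non-negativity of this one-step gap is the technical heart of the argument and the principal obstacle. The structural reason to expect it is that Step~4 of Algorithm~\ref{tdmc} couples sibling tickets anti-correlatively: when $P>1$, the first child's ticket lies in $(0,1/P)$ while the siblings' tickets lie in $(1/P,1)$. This negative correlation propagates into opposing downstream survival tendencies and, because subtree contributions to $\phi(X_t)$ are additive with subtrees independent given their roots, translates into a reduction of $\mathbf{E}[\phi(X_t)^2]$ relative to the independent-ticket case produced by a refresh. My plan to verify this quantitatively is by backward induction on $t-k$: at $t-k=0$, $W$ depends only on positions and the gap vanishes. For $t-k\ge 1$, the decomposition $V(x,\theta)=\sum_j g_j(\theta^{(j)};x^{(j)})+2\sum_{j<j'}h_j(\theta^{(j)};x^{(j)})h_{j'}(\theta^{(j')};x^{(j')})$ isolates diagonal contributions (handled by the one-particle version of Theorem~\ref{theorem:mean}) and cross contributions, where a Chebyshev/FKG-type correlation inequality applied to the monotone $h$-functions and the joint TDMC ticket law of siblings closes the argument. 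Strictness when $f>0$ follows because this correlation inequality is strict whenever $P^{(j)}\ne 1$ occurs with positive probability, the generic case for any non-trivial $\v$.
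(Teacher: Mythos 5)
Your telescoping architecture is a genuinely different organisation from the paper's: the paper runs a forward induction on $t$ comparing the two algorithms head-on, which forces it to compare the distinct downstream functionals $F_1,F_2$ (for Algorithm~\ref{tdmc}) and $F^0_1,F^0_2$ (for Algorithm~\ref{dmc}) using both the induction hypothesis and Theorem~\ref{theorem:mean}, whereas your interpolating family $Y^{(k)}$ keeps the downstream dynamics identical within each telescope increment, so the diagonal terms cancel exactly by the one-step identity of Lemma~\ref{lemma:onestep} and only cross terms survive. Two repairs are needed for the coupling to work as you intend: the refresh must apply to the \emph{offspring} tickets assigned at the step in question (i.e.\ after the branching decision), so that the two hybrids share the same population and the same $N^{(j)}$ at that time; and cross terms between offspring of \emph{different} parents are equal under the two ticket laws by Lemma~\ref{lemma:onestep}, so the comparison localises to siblings of a common parent.

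The genuine gap is that one-generation sibling inequality, which is exactly the paper's Lemma~\ref{lemma:varcompare}, and the tool you propose for it — a Chebyshev/FKG-type correlation inequality for monotone functions of the sibling tickets — cannot close it. Conditionally on the offspring number $N^{(j)}=n$, the sibling tickets are \emph{independent} in both schemes (one ticket $\mathcal{U}(0,1/P)$ and $n-1$ i.i.d.\ $\mathcal{U}(1/P,1)$, versus $n$ i.i.d.\ $\mathcal{U}(0,1)$), so there is no association structure for FKG to act on; one is comparing two product measures with different marginals, and the inequality in fact \emph{fails} conditionally on $n$. Concretely, take $n=2$, $P=10$ and a decreasing $F$ with average $2$ on $(0,1/P)$ and average $1$ on $(1/P,1)$: the ticketed cross term is $2\cdot 2\cdot 1=4$ while the uniform-ticket cross term is $2\,(0.1\cdot 2+0.9\cdot 1)^2=2.42$, i.e.\ the wrong direction. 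The inequality only holds after averaging against the specific law $N=\lfloor P+u\rfloor$, because the moments $\mathbf{E}[(N-1)_+]=P-1$, $\mathbf{E}[N(N-1)]=(P-R)(P-1+R)$ and $\mathbf{E}[(N-1)(N-2)\mathbf{1}_{N>1}]=(P-1-R)(P-2+R)$, with $R=P-\lfloor P\rfloor$, conspire so that the difference reduces (after discarding a term whose sign is controlled by monotonicity of $F$) to a multiple of $P(P-1)-(P-R)(P-1+R)=R(R-1)\le 0$. Monotonicity enters only to fix the sign of $B=P\int_0^{1/P}F-(1-1/P)^{-1}\int_{1/P}^{1}F\ge 0$, which is the Chebyshev-type step; the heart of the lemma is the floor-function algebra above, which your proposal does not supply. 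The strictness claim for $f>0$ inherits the same gap.
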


The following straightforward observation is an important tool in  comparing Algorithms \ref{dmc} and \ref{tdmc}.
\begin{lemma}\label{tdmc2dmc} After  replacing
the rules
\[
\theta^{(j,1)}_{{k+1}} = \frac{\theta^{(j)}_{{k}}}{P^{(j)}} \quad \text{and}\quad \theta^{(j,i)}_{{k+1}}\sim  \mathcal{U}((P^{(j)})^{-1},1) \quad\text{for $i>1$}\;,
\]
in Step 4 of Algorithm~\ref{tdmc} by 
\begin{equ}[e:resampl]
\theta^{(j,i)}_{{k+1}}\sim  \mathcal{U}(0,1)\quad \text{ for all $i$}\;,
\end{equ}
the process generated by Algorithm~\ref{tdmc} becomes identical in law to the process generated by Algorithm~\ref{dmc}.
\end{lemma}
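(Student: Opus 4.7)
The plan is a direct induction on the step index $k$. The key observation is that, once the tickets $\{\theta^{(j)}_k\}$ entering Step 3 of Algorithm~\ref{tdmc} are i.i.d.\ $\mathcal{U}(0,1)$ and independent of the proposals $\{\tilde x^{(j)}_{k+1}\}$, the rule of Step 3 produces, conditional on the $P^{(j)}$'s, exactly the same joint distribution of offspring counts $\{N^{(j)}\}$ and offspring positions $\{x^{(j,i)}_{k+1}\}$ as Algorithm~\ref{dmc}. Under the substitution \eqref{e:resampl}, the tickets at the beginning of every step are i.i.d.\ uniforms independent of everything preceding, so the inductive hypothesis is trivially preserved.

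The one-step equivalence I would establish by a case analysis on $P^{(j)}$. Fix a particle $j$. If $P^{(j)} \geq 1$, then for any $\theta \in (0,1)$ we have $\theta \leq 1 \leq P^{(j)}$, so the killing branch never fires; moreover $\lfloor P^{(j)} + u^{(j)}\rfloor \geq 1$, so the $\max$ in \eqref{e:noff} is inert and $N^{(j)} = \lfloor P^{(j)} + u^{(j)}\rfloor$, matching Algorithm~\ref{dmc} exactly. If $P^{(j)} < 1$, then under a uniform ticket the event $\{\theta \leq P^{(j)}\}$ has probability $P^{(j)}$ and forces $N^{(j)} = 1$ (because $\lfloor P^{(j)} + u^{(j)}\rfloor \in \{0,1\}$), while $\{\theta > P^{(j)}\}$ has probability $1 - P^{(j)}$ and gives $N^{(j)} = 0$; hence $N^{(j)} \sim \mathrm{Bernoulli}(P^{(j)})$, which is precisely the law of $\lfloor P^{(j)} + u^{(j)}\rfloor$ in Algorithm~\ref{dmc} in this regime. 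Since the different $j$'s use independent tickets and independent $u^{(j)}$'s in both algorithms, and the offspring are all placed at $\tilde x^{(j)}_{k+1}$, the joint one-step update has identical law in the two schemes.

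The induction then closes easily. The base case is given by Step 1 of Algorithm~\ref{tdmc}, which draws $\theta^{(j)}_0 \sim \mathcal{U}(0,1)$ independently. Assuming at step $k$ that $\{x^{(j)}_k\}$ is distributed as the Algorithm~\ref{dmc} configuration at step $k$ and that, conditionally on it, the tickets are i.i.d.\ uniform, the one-step equivalence shows that $\{x^{(j)}_{k+1}\}$ inherits the correct law, while the replacement \eqref{e:resampl} restores independent uniform tickets, re-establishing the hypothesis at step $k+1$. The main (and only) subtlety is confirming that the $\max\{\,\cdot\,,1\}$ in \eqref{e:noff}, which would inflate the offspring count were the tickets not uniform, is exactly compensated by the killing event $\{\theta > P^{(j)}\}$ under \eqref{e:resampl}; once this bookkeeping in the regime $P^{(j)}<1$ is verified, the remainder of the argument is routine.
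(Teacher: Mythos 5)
Your argument is correct. The paper offers no written proof of this lemma---it is presented as a ``straightforward observation''---and your verification is exactly the one implicitly intended: the case split on $P^{(j)}\geq 1$ (where the ticket test never kills and the $\max\{\cdot,1\}$ in \eqref{e:noff} is inert) versus $P^{(j)}<1$ (where killing with probability $1-P^{(j)}$ combined with the forced $N^{(j)}=1$ reproduces the $\mathrm{Bernoulli}(P^{(j)})$ law of $\lfloor P^{(j)}+u^{(j)}\rfloor$), together with the observation that resampled tickets are i.i.d.\ uniform and carry no information forward, so the induction closes.
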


\begin{remark}
Note that the resampling of the tickets in \eref{e:resampl} applies to \textit{all} particles, including $i=1$.
\end{remark}

As with the proof of Theorem~\ref{theorem:mean}, the proof of Theorem~\ref{theorem:varcompare} is inductive.  The most cumbersome component of that argument is obtaining a comparison  of cross terms that appear when expanding the variance of the estimators produced by Algorithms \ref{dmc} and \ref{tdmc}.  This is encapsulated in the following lemma.
\begin{lemma}\label{lemma:varcompare}
 Let $F$ be any non-negative bounded function of $\mathbb{R}^d\times \mathbb{R}$, which is decreasing in its second argument.  Then
\begin{equ}
\mathbf{E}\biggl( \sum_{j=1}^{N_{1}} \sum_{i\neq j}^{N_{1}} F(x^{(j)}_{1},\theta^{(j)}_{1})F(x^{(i)}_{1},\theta^{(i)}_{1})\biggr)
\leq \mathbf{E}^1\biggl( \sum_{j=1}^{N_{1}} \sum_{i\neq j}^{N_{1}} F(x^{(j)}_{1},\theta^{(j)}_{1})F(x^{(i)}_{1},\theta^{(i)}_{1})\biggr)\;,
\end{equ}
and the bound is strict when, for each $x$, $F(x,\theta)$ is a strictly decreasing function of $\theta$.
\end{lemma}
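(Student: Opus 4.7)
The plan is to condition on $x_0$ and $\tilde{x}^{(1)}_1$. With $M = 1$, all time-$1$ particles share the common spatial position $\tilde{x}^{(1)}_1$, so the quantity $P := P^{(1)} = e^{-\v(x_0, \tilde x^{(1)}_1)}$ becomes deterministic. Writing $G(\theta) := F(\tilde{x}^{(1)}_1, \theta)$ (non-negative and decreasing in $\theta$) and $S := \sum_{j \neq i} G(\theta^{(j)}_1) G(\theta^{(i)}_1)$, the lemma reduces to showing $\mathbf{E}[S \mid P] \leq \mathbf{E}^1[S \mid P]$. A short case analysis shows that $N_1$ has the same distribution under both algorithms: Bernoulli$(P)$ when $P < 1$ (so $S = 0$ on both sides for lack of a second particle), and equal in law to $\lfloor P + u\rfloor$ for $u \sim \mathcal{U}(0,1)$ when $P \geq 1$.

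For $P \geq 1$, I would invoke Lemma~\ref{tdmc2dmc} to realise Algorithm~\ref{dmc} as Algorithm~\ref{tdmc} with tickets freshly resampled on $(0,1)$. In Algorithm~\ref{tdmc} the distinguished ticket $\theta^{(1,1)}_1$ is distributed as $\mathcal{U}(0, 1/P)$ while the remaining $N_1 - 1$ offspring carry iid $\mathcal{U}(1/P, 1)$ tickets, all jointly independent of $N_1$. The key decomposition is to introduce
\begin{equ}
A = P \int_0^{1/P} G(u)\,du, \qquad B = \frac{P}{P-1} \int_{1/P}^1 G(u)\, du, \qquad D = A - B \geq 0,
\end{equ}
so that $\bar G := \int_0^1 G = B + D/P$. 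Splitting $S$ into the $2(N_1-1)$ ordered pairs that involve the distinguished index and the $(N_1-1)(N_1-2)$ pairs that do not, and using the identity $2\mathbf{E}[N_1 - 1] + \mathbf{E}[(N_1-1)(N_1-2)] = \mathbf{E}[N_1(N_1-1)]$ (which follows from $\mathbf{E}[N_1] = P$), direct computation yields
\begin{equ}
\mathbf{E}[S] = \mathbf{E}[N_1(N_1-1)]\, B^2 + 2(P-1)\, BD,\qquad \mathbf{E}^1[S] = \mathbf{E}[N_1(N_1-1)] \left(B + D/P\right)^2.
\end{equ}

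The algebraic core of the argument is then the identity $\mathbf{E}[N_1(N_1-1)]/P - (P-1) = r(1-r)/P$, where $r := P - \lfloor P\rfloor$, which is read off from $\mathbf{E}[N_1(N_1-1)] = \lfloor P\rfloor(\lfloor P\rfloor - 1) + 2 \lfloor P\rfloor r$. Using this one rewrites the difference as
\begin{equ}
\mathbf{E}^1[S] - \mathbf{E}[S] = \frac{2 r(1-r)}{P}\, BD + \frac{\mathbf{E}[N_1(N_1-1)]}{P^2}\, D^2,
\end{equ}
which is manifestly non-negative, and strictly positive when $D > 0$ (equivalently, when $G$ is strictly decreasing and $P > 1$). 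Averaging over $(x_0, \tilde x^{(1)}_1)$ finishes the proof. I expect the main obstacle to be the bookkeeping around the two regimes $P < 1$ and $P \geq 1$ together with spotting the key algebraic identity above; once it is in place, non-negativity of the difference is immediate from $D \geq 0$ and the two terms being separately non-negative.
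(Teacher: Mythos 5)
Your proof is correct and follows essentially the same route as the paper's: condition on $x_0$ and $\tilde x^{(1)}_1$, split the double sum into pairs involving the distinguished first offspring (ticket $\sim\mathcal{U}(0,1/P)$) and pairs among the others (tickets $\sim\mathcal{U}(1/P,1)$), compute the moments of $N_1$ explicitly via $r=P-\lfloor P\rfloor$, and compare with the i.i.d.-uniform-ticket case. The only difference is cosmetic: where the paper drops the $B^2$-term and invokes a ``two squares of equal perimeter'' argument to bound the difference, you retain all terms and obtain the exact identity $\mathbf{E}^1[S]-\mathbf{E}[S]=\tfrac{2r(1-r)}{P}BD+\tfrac{\mathbf{E}[N_1(N_1-1)]}{P^2}D^2$, which is slightly sharper and makes the strictness condition ($D>0$ and $P>1$ on a set of positive probability) more transparent.
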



\begin{proof}
First note that $x^{(j)}_1= \tilde x^{(1)}_1$ for each $j$.  Furthermore, conditioned on $x_0$, $\tilde x^{(1)}_1$, and $N_1$, the tickets are independent of each other and
  for $j\geq 2$ they are identically distributed (for Algorithm~\ref{dmc} they are identically distributed for $j\geq1$).
These facts imply that, for the new scheme,
\begin{multline}\label{new1}
\mathbf{E}\biggl( \sum_{j=1}^{N_1} \sum_{i\neq j}^{N_1} F({\tilde x^{(1)}_1,\theta^{(j)}_1})F({\tilde x^{(1)}_1,\theta^{(i)}_{1}})\,\bigg\vert  \, \tilde x^{(1)}_1\biggr) = \\
\mathbf{E}\left(2\, \mathbf{1}_{\left(N_1>1\right)}(N_1-1)\,\big\vert\, \tilde x^{(1)}_1\right) \mathbf{E}\left( F({\tilde x^{(1)}_1,\theta^{(1)}_1})\,\big\vert\, \tilde x^{(1)}_1\right)  \mathbf{E}\left(F({\tilde x^{(1)}_1,\theta^{(2)}_1})\,\big\vert\, \tilde x^{(1)}_1\right)\\
 + \mathbf{E}\left(\mathbf{1}_{\left(N_1>1\right)}(N_1-1)(N_1-2)\,\big\vert\, \tilde x^{(1)}_1\right) \mathbf{E}\left(F({\tilde x^{(1)}_1,\theta^{(2)}_1}) \,\big\vert\, \tilde x^{(1)}_1\right)^2.
\end{multline}
For Algorithm~\ref{dmc}  the tickets $\theta^{(j)}_1$ are i.i.d. conditioned on $\tilde x^{(1)}_1$ so that
\begin{multline}\label{classical1}
\mathbf{E}^1\biggl( \sum_{j=1}^{N_1} \sum_{i\neq j}^{N_1} F({\tilde x^{(1)}_1,\theta^{(j)}_1})F({\tilde x^{(1)}_1,\theta^{(i)}_1})\,\bigg\vert  \, \tilde x^{(1)}_1\biggr) =\\
\mathbf{E}^1\left(N_1(N_1-1) \,\big\vert\, \tilde x^{(1)}_1\right)\mathbf{E}^1\left( F(\tilde x^{(1)}_1,\theta^{(1)}_1)\,\big\vert\, \tilde x^{(1)}_1 \right)^2.
\end{multline}

Let
\[
P = e^{-\v(x_0,\tilde x^{(1)}_{1})}\;.
\]
Note that on the set $\left\{P\leq 1\right\}$, we have $N_{1}\leq 1$, so that expressions \eqref{new1} and \eqref{classical1}  vanish.
On the set $\left\{P> 1\right\}$, we have 
\begin{equs}
\mathbf{E}\left(F({\tilde x^{(1)}_{1},\theta^{(2)}_{1}})\,\big\vert\, \tilde x^{(1)}_{1}\right)
 &=  \frac{1}{1-P^{-1}}\int_{P^{-1}}^{1}  F({\tilde x^{(1)}_{1}, u})
 \,du\;,\\
\mathbf{E}\left(F({\tilde x^{(1)}_{1},\theta^{(1)}_{1}})\,\big\vert\, \tilde x^{(1)}_{1}\right)
 &=  P \int_{0}^{P^{-1}}  F({\tilde x^{(1)}_{1}, u})
\, du\;,
\end{equs}
and
\[
\mathbf{E}^1\left( F(\tilde x^{(1)}_{1},\theta^{(1)}_{1})\,\big\vert\, \tilde x^{(1)}_{1} \right) =  \int_0^{1} F(\tilde x^{(1)}_{1}, u)\,du\;,
\]
so that
\begin{equ}
\mathbf{E}^1\left(F({\tilde x^{(1)}_{1},\theta^{(1)}_{1}})\,\big\vert\, \tilde x^{(1)}_{1} \right)= 
\frac{1}{P}\mathbf{E}\left(F({\tilde x^{(1)}_{1},\theta^{(1)}_{1}})\,\big\vert\, \tilde x^{(1)}_{1}\right)
+ \frac{(P-1)}{P}
\mathbf{E}\left(F({\tilde x^{(1)}_{1},\theta^{(2)}_{1}})\,\big\vert\, \tilde x^{(1)}_{1}\right)\;.
\end{equ}
In other words, if we set
\[
A = \mathbf{E}\left(F({\tilde x^{(1)}_{1},\theta^{(2)}_{1}})\,\big\vert\, \tilde x^{(1)}_{1}\right)
\]
and
\[
B = \mathbf{E}\left(F({\tilde x^{(1)}_{1},\theta^{(1)}_{1}})\,\big\vert\, \tilde x^{(1)}_{1}\right)-\mathbf{E}\left(F({\tilde x^{(1)}_{1},\theta^{(2)}_{1}})\,\big\vert\, \tilde x^{(1)}_{1}\right) ,
\]
we have the identity
\begin{equation*}
\mathbf{E}^1\left(F({\tilde x^{(1)}_{1},\theta^{(1)}_{1}})\,\big\vert\, \tilde x^{(1)}_{1} \right)= 
A + \frac{1}{P}B.
\end{equation*}
In Algorithm~\ref{tdmc}, $\theta^{(1)}_{1} \leq \theta^{(2)}_{1}$ so that, if
$F$ is a decreasing function of $\theta$, then 
\[
B\geq 0.
\]

Now note that the variable $N_{1}$ is the same for both algorithms and is given
explicitly by the formula
\begin{multline*}
N_{1} = \mathbf{1}_{\left(\theta^{(1)}_{1}\leq e^{-\v(x_0,\tilde x^{(1)}_{1})}\leq 1\right)} 
+ \mathbf{1}_{\left(\v(x_0,\tilde x^{(1)}_{1})<0\right)}\\
\times \left( 
\left\lfloor e^{-\v(x_0,\tilde x^{(1)}_{1})} \right\rfloor +
\mathbf{1}_{\left( U < e^{-\v(x_0,\tilde x^{(1)}_{1})}- \left\lfloor
e^{-\v(x_0,\tilde x^{(1)}_{1})} \right\rfloor\right)}\right)\;,
\end{multline*}
where $U$ is a uniform random variable in $[0,1]$. 
Using this formula we can compute the expectations involving $N_{1}$ explicitly.  Defining
\[
R = P - \lfloor P \rfloor,
\]
 on the event $\left\{P> 1\right\}$  we have that
\begin{equs}
\mathbf{E}^1\left(N_{1}(N_{1}-1) \,\vert\, \tilde x^{(1)}_{1}\right) &= (P-R)(P-1+R)\;,\\
\mathbf{E}\left(\mathbf{1}_{\left(N_{1}>1\right)}(N_{1}-1)\,\vert\, \tilde x^{(1)}_{1}\right) &= P-1\;,
\end{equs}
and
\[
\mathbf{E}\left(\mathbf{1}_{\left(N_{1}>1\right)}(N_{1}-1)(N_{1}-2)\,\vert\, \tilde x^{(1)}_{1}\right) 
= (P-1-R)(P-2+R).
\]

The difference of terms \eqref{classical1} and \eqref{new1}, which vanishes on $P<2$, can now be written as
\begin{equation*}
2(P-1)AB   - 2(P-R)(P-1+R)AB \frac{1}{P} - (P-R)(P-1+R)B^2\frac{1}{P^2}
\end{equation*}
on the set $\{P\geq 2\}$.  Recalling that $B\geq 0$ (we can drop the last term) and rearranging we see that this expression is bounded above by
\begin{equation}\label{squares}
2\frac{AB}{P}\left(P(P-1)   - (P-R)(P-1+R)\right).
\end{equation}
Note that since $0\leq R\leq 1$ we have $P-1\leq P-1+R \leq P-R \leq P$.  The difference in the parenthesis in \eqref{squares} is the difference in the area of two squares with equal perimeter, the second of which (in the sense of the last sequence of inequalities) is closer to square.  The difference is therefore non-positive.  
All bounds are strict whenever, for each $x$,  $F(x,\theta)$ is a strictly decreasing function of $\theta$.
\end{proof}

Finally we complete the proof that the variance of the estimator generated by Algorithm~\ref{tdmc} is bounded above by the variance of the estimator generated by Algorithm~\ref{dmc}.
\begin{proof}[of Theorem~\ref{theorem:varcompare}]
By Theorem~\ref{theorem:mean} we have that
\[
\mathbf{E} \widehat  f_t = \mathbf{E}^1 \widehat f _t\;, 
\]
so it suffices to show that
\begin{equation}\label{varinduct}
\mathbf{E}({\widehat  f _t})^2
 \leq \mathbf{E}^1( \widehat  f _t) ^2\;. 
\end{equation}
Furthermore, since the variance does not change by adding a constant to $f$ we can, and will from now on, 
assume that $f(x) \ge 0$ for every $x$.
In order to prove \eref{varinduct}, we will proceed by induction.  Since the random variables $N_{1}$ and $\tilde x^{(1)}_{1}$ are the same in both schemes, the result is true for
$k=1$.  Now assume that \eqref{varinduct} holds through step $k-1$.  We will show that
\begin{equation}\label{varinduct2}
\mathbf{E}\Bigl(\sum_{j=1}^{N_{{k}}} f(x^{(j)}_{{k}})\Bigr)^2 \leq \mathbf{E}^1\Bigl( \sum_{j=1}^{N_{{k}}} f(x^{(j)}_{{k}})\Bigr)^2\;. 
\end{equation}
Observe that we can write
\[
\sum_{j=1}^{N_{{k}}} f(x^{(j)}_{{k}}) =
 \sum_{j=1}^{N_{1}} \sum_{i=1}^{N^{(j)}_{{1},{k}}} f(x^{(j,i)}_{{1},{k}})\;,
\]
where  we used $N^{(j)}_{{1},{k}}$  to denote the number of particles at time ${k}$ whose ancestor at time ${1}$ is $x^{(j)}_{1}$.  The
descendants of $x^{(j)}_{1}$ are enumerated by $x^{(j,i)}_{{1},{k}}$ for $i=1,2,\dots,N^{(j)}_{{1},{k}}$.  

By the conditional independence of the descendants of the particles at time ${k}$ when
 conditioned on $x_0$, $\tilde x^{(1)}_{1}$, and $N_{1}$,  we have that
\begin{equs}
\mathbf{E}\biggl(\sum_{j=1}^{N_{{k}}} f(x^{(j)}_{{k}})\biggr)^2
&= \mathbf{E}\biggl( \sum_{j=1}^{N_{1}} \mathbf{E}_{x^{(j)}_{1},\theta^{(j)}_{1}} \biggl( \sum_{i=1}^{N_{k-1}} f(x^{(i)}_{k-1})\biggr)^2\biggr)\\
&\quad + \mathbf{E}\Biggl( \sum_{j=1}^{N_{1}} \sum_{i\neq j}^{N_{1}}  \mathbf{E}_{x^{(j)}_{1},\theta^{(j)}_{1}} \biggl(\sum_{l=1}^{N_{k-1}} f(x^{(l)}_{k-1})\biggr)\\ &\qquad \times
\mathbf{E}_{x^{(i)}_{1},\theta^{(i)}_{1}} \biggl( \sum_{l=1}^{N_{k-1}} f(x^{(l)}_{k-1})\biggr)\Biggr)\;.\label{expsquare1}
\end{equs}
An identical expression (with $\mathbf{E}$ replaced by $\mathbf{E}^1$) holds for Algorithm~\ref{dmc}.
Setting
\[
F_1(x,\theta) =  \mathbf{E}_{x,\theta} \sum_{l=1}^{N_{k-1}} f(x^{(l)}_{k-1})\;,
\]
and
\[
F_2(x,\theta) =  \mathbf{E}_{x,\theta} \biggl( \sum_{i=1}^{N_{k-1}} f(x^{(i)}_{k-1})\biggr)^2\;,
\]
we can rewrite the last identity in the case of Algorithm~\ref{tdmc} as
\begin{equ}
\mathbf{E}\biggl(\sum_{j=1}^{N_{{k}}} f(x^{(j)}_{{k}})\biggr)^2
= \mathbf{E} \sum_{j=1}^{N_{1}} F_2({x^{(j)}_{1},\theta^{(j)}_{1}})
+ \mathbf{E} \sum_{j=1}^{N_{1}} \sum_{i\neq j}^{N_{1}}  F_1({x^{(j)}_{1},\theta^{(j)}_{1}} )
F_1({x^{(i)}_{1},\theta^{(i)}_{1}}).
\end{equ}
In the case of Algorithm~\ref{dmc} with
\[
F^0_1(x,\theta) =  \mathbf{E}^1_{x,\theta} \sum_{l=1}^{N_{k-1}} f(x^{(l)}_{k-1})
\]
and
\[
F^0_2(x,\theta) =  \mathbf{E}^1_{x,\theta} \biggl( \sum_{i=1}^{N_{k-1}} f(x^{(i)}_{k-1})\biggr)^2\;,
\]
we have
\begin{equ}
\mathbf{E}^1\biggl(\sum_{j=1}^{N_{{k}}} f(x^{(j)}_{{k}})\biggr)^2
= \mathbf{E}^1 \sum_{j=1}^{N_{1}} F^0_2({x^{(j)}_{1},\theta^{(j)}_{1}})
+ \mathbf{E}^1 \sum_{j=1}^{N_{1}} \sum_{i\neq j}^{N_{1}}  F^0_1({x^{(j)}_{1},\theta^{(j)}_{1}} )
F^0_1({x^{(i)}_{1},\theta^{(i)}_{1}})\;.
\end{equ}

Note that Lemma~\ref{lemma:onestep} implies that
\[
\mathbf{E} \sum_{j=1}^{N_{1}} F_2(x^{(j)}_{1},\theta^{(j)}_{1})  = \mathbf{E}^1 \sum_{j=1}^{N_{1}} F_2(x^{(j)}_{1},\theta^{(j)}_{1}).
\]
Under the rules of Algorithm~\ref{dmc}, conditioned on $x_0$ and $\tilde x^{(1)}_{1}$, the ticket $\theta^{(j)}_{1}$ is independent of $N_{1}$.   So we can write the last equality as
\begin{equation*}
 \mathbf{E} \sum_{j=1}^{N_{1}} F_2(x^{(j)}_{1},\theta^{(j)}_{1}) = \mathbf{E}^1\biggl( \sum_{j=1}^{N_{1}}  \mathbf{E}_{x^{(j)}_{1}}\biggl( F_2(x^{(j)}_{1},\theta^{(j)}_{1}) \biggr) \biggr).
\end{equation*}
By our inductive hypothesis we then have that
\begin{equation*}
 \mathbf{E} \sum_{j=1}^{N_{1}} F_2(x^{(j)}_{1},\theta^{(j)}_{1})\leq
 \mathbf{E}^1\biggl( \sum_{j=1}^{N_{1}}  \mathbf{E}_{x^{(j)}_{1}}\biggl( F^0_2(x^{(j)}_{1},\theta^{(j)}_{1}) \biggr) \biggr)\end{equation*}
Appealing again to the conditional independence of $\theta^{(j)}_{1}$ and $N_{1}$ under Algorithm~\ref{dmc} we have that
\begin{equation*}
 \mathbf{E} \sum_{j=1}^{N_{1}} F_2(x^{(j)}_{1},\theta^{(j)}_{1}) \leq
 \mathbf{E}^1\sum_{j=1}^{N_{1}}  F^0_2(x^{(j)}_{1},\theta^{(j)}_{1})\;.
 \end{equation*}

We now move on to the second term in \eqref{expsquare1}.
It follows from the definition of Algorithm~\ref{tdmc} that the function $F_1$ is strictly decreasing in $\theta$,
so that Lemma~\ref{lemma:varcompare} yields
\begin{equ}
 \mathbf{E}\biggl( \sum_{j=1}^{N_{1}} \sum_{i\neq j}^{N_{1}} F_1({x^{(j)}_{1},\theta^{(j)}_{1}})F_1({x^{(i)}_{1},\theta^{(i)}_{1}})\biggr)
\leq
 \mathbf{E}^1\biggl( \sum_{j=1}^{N_{1}} \sum_{i\neq j}^{N_{1}}F_1({x^{(j)}_{1},\theta^{(j)}_{1}})F_1({x^{(i)}_{1},\theta^{(i)}_{1}})\biggr).
\end{equ}
Under the rules of Algorithm~\ref{dmc}, conditional on $x_0$ and $\tilde x^{(1)}_{1}$, the $\theta^{(1)}_{1}$ are all independent of each other and of $N_{1}$.
Therefore we can integrate over the tickets to obtain
\begin{multline*}
  \mathbf{E}\biggl( \sum_{j=1}^{N_{1}} \sum_{i\neq j}^{N_{1}} F_1({x^{(j)}_{1},\theta^{(j)}_{1}})F_1({x^{(i)}_{1},\theta^{(i)}_{1}})\biggr) \\
\leq
 \mathbf{E}^1\biggl( \sum_{j=1}^{N_{1}} \sum_{i\neq j}^{N_{1}}  \mathbf{E}_{x^{(j)}_{1}} \biggl( F_1({x^{(j)}_{1},\theta^{(j)}_{1}})\biggr)
\mathbf{E}_{x^{(i)}_{1}} \biggl(F_1({x^{(i)}_{1},\theta^{(i)}_{1}}) \biggr)\biggr).
\end{multline*}
By Theorem~\ref{theorem:mean} we have that
\[
  \mathbf{E}_{x^{(j)}_{1}} \sum_{l=1}^{N_{k-1}} f(x^{(l)}_{k-1}) =  \mathbf{E}_{x^{(j)}_{1}}^0 \sum_{l=1}^{N_{k-1}} f(x^{(l)}_{{k-1}})
  \]
  or
  \[
   \mathbf{E}_{x^{(j)}_{1}}  F_1({x^{(j)}_{1},\theta^{(j)}_{1}})=  \mathbf{E}^1_{x^{(j)}_{1}} F^0_1({x^{(j)}_{1},\theta^{(j)}_{1}})  \;.
   \]
  This yields 
  \begin{multline*}
 \mathbf{E}\biggl( \sum_{j=1}^{N_{1}} \sum_{i\neq j}^{N_{1}} F_1({x^{(j)}_{1},\theta^{(j)}_{1}})F_1({x^{(i)}_{1},\theta^{(i)}_{1}})\biggr) \\
\leq
 \mathbf{E}^1\biggl( \sum_{j=1}^{N_{1}} \sum_{i\neq j}^{N_{1}}  \mathbf{E}^1_{x^{(j)}_{1}} \biggl( F^0_1({x^{(j)}_{1},\theta^{(j)}_{1}})\biggr)
\mathbf{E}^1_{x^{(i)}_{1}} \biggl(F^0_1({x^{(i)}_{1},\theta^{(i)}_{1}}) \biggr)\biggr).
\end{multline*}
Reinserting  the tickets we obtain
\begin{equ}
 \mathbf{E}\biggl( \sum_{j=1}^{N_{1}} \sum_{i\neq j}^{N_{1}} F_1({x^{(j)}_{1},\theta^{(j)}_{1}})F_1({x^{(i)}_{1},\theta^{(i)}_{1}})\biggr)
\leq
 \mathbf{E}^1\biggl( \sum_{j=1}^{N_{1}} \sum_{i\neq j}^{N_{1}}F^0_1({x^{(j)}_{1},\theta^{(j)}_{1}})F^0_1({x^{(i)}_{1},\theta^{(i)}_{1}})\biggr)\;,
\end{equ}
which completes the proof.
\end{proof}

\bibliographystyle{./Martin}
\markboth{\sc \refname}{\sc \refname}
\bibliography{./refs}

\end{document}